\numberwithin{equation}{section}
\newtheorem{theorem}{Theorem}[section]
\newtheorem{lemma}{Lemma}[section]
\newtheorem{proposition}{Proposition}[section]
\newtheorem{example}{Example}[section]
\newtheorem{corollary}{Corollary}[section]
\newtheorem{definition}{Definition}[section]
\newtheorem{remark}{Remark}[section]
\title{Some properties of the solution of the vertical tensor complementarity problem\thanks{This research was supported by National Natural Science Foundations of
China (No. 11961082).}}
\author{Li-Ming Li\thanks{llm13508044340@163.com}, \quad
        Shi-Liang Wu\thanks{Corresponding author: slwuynnu@126.com}\\
        {\small{\it  School of Mathematics, Yunnan Normal University, }}\\
        {\small{\it  Kunming, Yunnan, 650500, PR China}}}
\date{}
\begin{document}
\maketitle

\begin{abstract}
In this paper, we mainly focus on the existence and uniqueness of
the vertical tensor complementarity problem. Firstly, combining the
generalized-order linear complementarity problem with the tensor
complementarity problem, the vertical tensor complementarity problem
is introduced. Secondly, we define some sets of special tensors, and
illustrate the inclusion relationships. Finally, we show that the
solution set of the vertical tensor complementarity problem is
bounded under certain conditions, and some sufficient conditions for
the existence and uniqueness of the solution of the vertical tensor
complementarity problem are obtained from  the view of the degree
theory and the equal form of the minimum function.

\textit{Keywords:} The vertical tensor complementarity problem; special
tensor sets; degree theory; the minimum function

\textit{Mathematics Subject Classification:} 90C33, 90C30, 65H10

\end{abstract}

\section{Introduction}
Over the past few decades, many mathematical researchers have conducted
extensive research on the nonlinear complementarity problem. Today,
the nonlinear complementarity problem has become a classical problem,
and has important applications in operations research and applied science.
For more detailed information, one can see
\cite{cottle1992linear,ferris1997engineering,harker1990finite,han2006nonlinear,facchinei2003finite}
and references therein.

In recent years, with the rise of tensor research, a special case of
the nonlinear complementarity problem: the tensor complementarity
problem, has also received widespread attention and research. The
tensor complementarity problem was first proposed in
\cite{song2014properties} by Song and Qi, i.e., find a vector $x\in
R^{n}$ or show that there is no vector such that
\begin{equation*}
    x\ge\theta,
    q+\mathcal{A} x^{m-1} \ge\theta,
    x^{T}(\mathcal{A} x^{m-1}+q) =0,
\end{equation*}
where $\theta$ denotes the zero vector, $\mathcal{A}$
and $q\in R^{n}$ are a given $m$
order $n$ dimensional tensor and a vector,
respectively, and $\mathcal{A} x^{m-1}$ in \cite{qi2005eigenvalues} is
defined as
\begin{equation*}
\left(\mathcal{A} x^{m-1}\right)_{i}=\sum_{i_{2}, \cdots, i_{n}}^{n}
a_{ii_{2}\cdots i_{n}}x_{i_{2}}\cdots x_{i_{n}},i\in[n],
\end{equation*}
which is denoted by TCP$\left(\mathcal{A}, q\right)$. They studied
the existence of the solution of the TCP$\left(\mathcal{A},
q\right)$ through the structure of the tensor, and obtained a
sufficient and necessary condition for the non-negative symmetric
tensor $\mathcal{A}$. Bai et al. \cite{bai2016global} studied the
global uniqueness and the solvability of the TCP$\left(\mathcal{A},
q\right)$, and obtained two results: $\left(1\right)$ if
$\mathcal{A}$ is a $P$-tensor, then the
TCP$\left(\mathcal{A},q\right)$ may have more than one solution for
some $q \in R^{n}$; $\left(2\right)$ if $\mathcal{A}$ is a strong
$P$-tensor, then the TCP$\left(\mathcal{A}, q\right)$ has
GUS-property for all $q \in R^{n}$. The readers can see
\cite{che2016positive,gowda2015z,huang2017formulating,luo2017sparsest,song2015properties}
to know more about the existence of the solution of the
TCP$\left(\mathcal{A}, q\right)$. In addition, many researchers are
committed to studying the numerical algorithm for solving the
TCP$\left(\mathcal{A}, q\right)$, see
\cite{huang2017formulating,song2016tensor,du2018tensor,du2019mixed,han2019continuation,liu2018tensor,wang2022randomized,dai2022gus}.

Goeleven \cite{goeleven1996uniqueness} introduced the
generalized-order linear complementarity problem  and studied the
existence and uniqueness of its solution. The generalized-order
linear complementarity problem has important applications in
stochastic impulse problems, mixed lubrication problems and singular
control problems in bounded intervals, see
\cite{mosco1976implicit,oh1986formulation,min1987singular,sun1989monotonicity}.

In this paper, combining the generalized-order linear
complementarity problem and the TCP$\left(\mathcal{A}, q\right)$, it
is natural to introduce the vertical tensor complementarity problem,
i.e., find a vector $x\in R^{n}$ or show that there is no vector
such that
\begin{equation*}
    q_{1}+\mathcal{A}_{1} x^{m-1} \ge\theta,
    q_{2}+\mathcal{A}_{2} x^{m-1} \ge\theta,
    (q_{1}+\mathcal{A}_{1} x^{m-1})^{T}(q_{2}+\mathcal{A}_{2} x^{m-1}) =0,
\end{equation*}
where $\mathcal{A}_{1}$, $\mathcal{A}_{2} $ are given $m$ order $n$
dimensional tensors, and $q_{1}$, $q_{2}\in R^{n}$ are given
vectors. We denote it by VTCP$(\tilde{\mathcal{A}} , \tilde{q })$
with $\tilde{\mathcal{A} } = \left \{ \mathcal{A}_{1},
\mathcal{A}_{2} \right \}$ and $\tilde{q}= \left \{ {q}_{1}, {q}_{2}
\right \}$. We consider the VTCP$(\tilde{\mathcal{A}} , \tilde{q })$
for two reasons: (1) it is equal to the tensor absolute value
equation, see \cite{du2018tensor}; (2) it is the generalization form
of the TCP$\left(\mathcal{A}, q\right)$, too. To our knowledge, so
far, for the unique solution of the VTCP$(\tilde{\mathcal{A}} ,
\tilde{q })$, the necessary and sufficient condition is
\emph{scarce}, which is our main motivation. Hence, based on this,
our aim is to address this problem, and obtain some useful results
on the sufficient condition for the unique solution of the
VTCP$(\tilde{\mathcal{A}} , \tilde{q })$.

It is a brief description for the rest of paper. In Section 2, we go
over some preliminary knowledge. In Section 3, we introduce some
tensor sets with special structures and demonstrate their
properties. In Section 4, we show that the solution set of the
VTCP$(\tilde{\mathcal{A}} , \tilde{q })$ is bounded under certain
conditions, and some sufficient conditions for the existence and
uniqueness of the solution of the VTCP$(\tilde{\mathcal{A}} ,
\tilde{q })$ are obtained on the base of the degree theory and the
equal form of minimum function. In Section 5, some conclusions are
given to end this paper.
\section{Preliminaries}

In this section, we review some definitions and notations.

We use $\mathcal{A}$, $\mathcal{B}$, $\mathcal{C}$ to represent the
tensors, $A$, $B$, $C$ to represent the matrices, $x$, $y$, $z$ to
represent the vectors. We use $x_{i}$ to represent the $i$th element
of the vector $x$, $I$ to represent the unit tensor (including the unit
vector and the identity matrix), $\theta$ to represent the zero tensor (including the zero vector and the zero matrix).
$x^{\left[m\right]}$ is a vector that
$\left(x^{\left[m\right]}\right)_{i}=\left(x_{i}\right)^m$, $i=1,
\cdots, n$. Given a vector $x \in R^{n}$,
\begin{equation*}
\left(x\right)_{+}:=(\max\left\{x_{1},0\right\},
\max\left\{x_{2},0\right\}, \cdots, \max\left\{x_{n},0\right\})
\end{equation*}
and
\begin{equation*}
\left(x\right)_{-}:=\left(\text{max}\left\{-x_{1},0\right\},
\text{max}\left\{-x_{2},0\right\}, \cdots,
\text{max}\left\{-x_{n},0\right\}\right).
\end{equation*}
Given $x, y\in R^{n}$, $x\wedge y\left({\rm or}~{\rm min}\left\{x,
y\right\}\right)$ represents the vector, whose $i$th element is equal
to \textrm{min}$\left(x_{i}, y_{i}\right)$; $x \vee  y\left({\rm
or}~{\rm max}\left\{x, y\right\}\right)$ represents the vector, whose
$i$th element is equal to \textrm{max}$\left(x_{i}, y_{i}\right)$.
$x>y$, $x \ge y$, $x<y$ and $x \le y$ mean $x_{i}>y_{i}$, $x_{i} \ge
y_{i}$, $x_{i}<y_{i}$ and $x_{i} \le y_{i}$, respectively, with
$i=1, \cdots, n$. $R_{+}^{n}:=\left \{ x \in R^{n}:x_{i}\ge \theta,
i=1, \cdots, n  \right \}$ and $R_{++}^{n}:=\left \{ x \in
R^{n}:x_{i}> \theta, i=1, \cdots, n  \right \}$.
$\left[n\right]:=\left\{1, \cdots , n\right\}$.

A tensor $\mathcal{A}=\left(a_{i_{1} i_{2}\cdots i_{m}}\right)$ is a
multidimensional array of elements represented as $a_{i_{1}
i_{2}\cdots i_{m}}\in F$, where $i_{j}=1, 2, \cdots, n_{j}$, $j=1,
2, \cdots, m$ and $F$ represents a field. In this paper, we consider
the problem in the field of real numbers, i.e., $F=R$. $m$ is called
the order of $\mathcal{A}$, and $\left(n_{1}, \cdots, n_{m}\right)$
is called the  dimension of $\mathcal{A}$. Obviously, when $m$= 2,
the tensor $\mathcal{A}$ is a matrix. $\mathcal{A}$ is called an $m$
order $n$ dimensional tensor, if $n_{1}=n_{2}=\cdots=n_{m}$.
$R^{\left[m, n\right]}$ represents the set of all $m$ order $n$
dimensional tensors. $a_{i,\cdots,i}$ is called the principal
diagonal element of $\mathcal{A}\in R^{\left[m, n\right]}$,
$i=1,\cdots,n$. $\mathcal{A}$ is called a symmetric tensor if
$a_{i_{1} i_{2}\cdots i_{m}}$ is invariant  no matter how $i_{1},
i_{2}, \cdots, i_{m}$ are arranged. Denote the set of all $m$ order
$n$ dimensional real symmetric tensors as $S^{\left[m, n\right]}$.

The generalized product of tensors \cite{shao2013general} is
required. Let $\mathcal{A}$ and $\mathcal{B}$ are $n$ dimensional
tensors with order $m\left(m \ge 2\right)$ and $k\left(k \ge
1\right)$, respectively. Define the product $\mathcal{A} \cdot
\mathcal{B}$  to be the following tensor $\mathcal{C}$ of order
$\left(m-1\right)\left(k-1\right)$ + 1 and dimension $n$:
\begin{equation*}
\mathcal{C}_{i\alpha _{1}\cdots \alpha _{m-1}}=  \sum_{i_{2}, \cdots
, i_{n}=1}^{n} a_{ii_{2}\cdots   i_{m}}b_{i_{2}\alpha _{1}}\cdots
b_{i_{m}\alpha _{m-1}} \left ( i \in \left [ n \right ], \alpha
_{1},\cdots, \alpha _{m-1}\in \left [ n \right ]^{k-1}     \right ).
\end{equation*}
By Theorem 1.1 and Example 1.1 in \cite{shao2013general}, the
generalized tensor product satisfies the associative law, and
$\mathcal{A} x^{m-1}$ can be written as $\mathcal{A}\cdot x$. By
Proposition 1.1 in \cite{shao2013general}, the generalized tensor
product satisfies
$\left(\mathcal{A}_{1}+\mathcal{A}_{2}\right)\cdot\mathcal{B}=\mathcal{A}_{1}\cdot\mathcal{B}+\mathcal{A}_{2}\cdot\mathcal{B}$,
where $\mathcal{A}_{1}$ and $\mathcal{A}_{2}$ have the same order, and
also satisfies
$A\cdot\left(\mathcal{B}_{1}+\mathcal{B}_{2}\right)=A\cdot\mathcal{B}_{1}+A\cdot\mathcal{B}_{2}$,
where $A$ is a matrix.

Based on the generalized product of tensors, the left and right
inverses of tensors \cite{bu2014inverse} are defined. Let
$\mathcal{A} \in C_{t}^{\left ( n, n \right ) }  $. If there is a
tensor $\mathcal{B} \in C_{k}^{\left ( n, n \right ) }$ such that
$\mathcal{A}\cdot\mathcal{B}=\mathcal{I} $, where $C_{k}^{\left ( m,
n \right ) }$ represents the set of all complex tensors with order
$k$ and dimension $\left(m, n, \cdots, n\right)$, then it is said
that $\mathcal{A} $ is  left inverse of $\mathcal{B}$, and is
denoted as $\mathcal{B}^{L_{t}}$; $\mathcal{B} $ is  right inverse
of $\mathcal{A}$, and is denoted as $\mathcal{A}^{R_{k}}$.

Inspired by the definition of $\mathcal{A} x^{m-1}$, we define the
following $\mathcal{A}\mathcal{B}$ as the product of $\mathcal{A}\in
R^{\left[m, n\right]}$ and $\mathcal{B}\in R^{\left[m-1, n\right]}$:
\begin{equation*}
\left(\mathcal{A}\mathcal{B}\right)_{i}=\sum_{i_{2}, \cdots  ,
i_{n}}^{n}a_{ii_{2}\cdots   i_{n}}b_{i_{2}\cdots   i_{n}}\left(i
\in[n]\right).
\end{equation*}

Now, we review the $R$-tensor \cite{song2014properties} and the
$Z$-tensor \cite{ding2003mtensor}. $\tilde{\mathcal{A} }$ is called
a $R$-tensor if and only if  the following system is inconsistent:
\begin{equation*}
    \begin{cases}
        \theta\ne x \ge\theta, t\ge 0, \\\left ( \mathcal{A} x^{m-1} \right )_{i}+t=0, \text{ if }  x_{i}>0,
        \\\left ( \mathcal{A} x^{m-1} \right )_{j}+t\ge 0, \text{ if }  x_{j}=0.
       \end{cases}
\end{equation*}
$\mathcal{A}$ is called a $\mathcal{Z}$-tensor if and only if all
its off-diagonal entries are non-positive.

\section{Sets of tensors with special structures}
In this section, we introduce some sets of tensors with special structures.

\begin{definition}\label{def:1}
    Let $\tilde{\mathcal{A} }=\left \{ \mathcal{A}_{1}, \mathcal{A}_{2}   \right \}$ with $\mathcal{A}_{1}, \mathcal{A}_{2}  \in R^{\left [ m, n  \right ] }$. We say that $\tilde{\mathcal{A} }$ is of
    \begin{enumerate}
        \item type $VR_{0}$ if $\mathcal{A}_{1}\cdot x \wedge \mathcal{A}_{2}\cdot x=\theta \Rightarrow x=\theta$;
        \item type $VE$ if $\mathcal{A}_{1}\cdot x \wedge \mathcal{A}_{2}\cdot x\le\theta \Rightarrow x=\theta$;
        \item type $VP$ if $\mathcal{A}_{1}\cdot x \wedge \mathcal{A}_{2}\cdot x\le\theta\le \mathcal{A}_{1}\cdot x \vee  \mathcal{A}_{2}\cdot x \Rightarrow x=\theta$;
        \item type $VP$-$\uppercase\expandafter{\romannumeral1}$ if $\mathcal{A}_{1}\cdot x \wedge \mathcal{A}_{2}\cdot x\le\theta\le \mathcal{A}_{1}\cdot x \vee  \mathcal{A}_{2}\cdot x $ have no solution in $\left ( R_{+}^{n}/\left\{\theta \right\} \right )     \cup  \left ( R_{-}^{n}/\left\{\theta \right\} \right )$;
        \item type $VP$-$\uppercase\expandafter{\romannumeral2}$ if there is no tensor $\mathcal{Z} \in  S^{\left [ m-1, n  \right ] }$, whose principal diagonal elements are not all zeros such that $\mathcal{A}_{1} \mathcal{Z} \wedge \mathcal{A}_{2}  \mathcal{Z}\le\theta\le \mathcal{A}_{1} \mathcal{Z} \vee  \mathcal{A}_{2} \mathcal{Z} $;
        \item type semi-positive if there exists a vector $ x>\theta$ such that $\mathcal{A}_{1}\cdot x >\theta$ and $\mathcal{A}_{2} \cdot x >\theta$;
        \item type $VQ$ if the VTCP$(\tilde{\mathcal{A}} , \tilde{q})$ has a solution for all $q_{1}, q_{2}\in R^{n}$.
    \end{enumerate}
\end{definition}

\begin{remark} For Definition \ref{def:1}, the condition of item 1 is equivalent to that the following system is
inconsistent:
    \begin{equation} \label{eq:VR}
    \begin{cases}
        x\ne \theta , \\\mathcal{A}_{2}\cdot x\ge\theta, \\\left (  \mathcal{A}_{1}\cdot x \right ) _{i}=\theta  , \text{ if }  \left (  \mathcal{A}_{2}\cdot x \right ) _{i}>\theta,
        \\\left (  \mathcal{A}_{1}\cdot x \right ) _{j}\ge \theta , \text{ if }  \left (  \mathcal{A}_{2}\cdot x \right ) _{j}=\theta;

       \end{cases}
    \end{equation}
the condition of item 2 is equivalent to that $\forall x\ne \theta
$, $\exists i\in \left[n\right]$ satisfies $\left (
\mathcal{A}_{1}\cdot x \right )_{i} >\theta$, $\left (
\mathcal{A}_{2}\cdot x \right )_{i} > \theta $; the condition of
item 3 is equivalent to that $\forall x\ne \theta $, $\exists i\in
\left[n\right]$ satisfies $\left ( \mathcal{A}_{1}\cdot x \right
)_{i} \left ( \mathcal{A}_{2}\cdot x \right )_{i} >\theta $; the
condition of item 4 is equivalent to that $\forall x \in \left (
R_{+}^{n}/\left\{\theta \right\} \right )     \cup  \left (
R_{-}^{n}/\left\{\theta \right\} \right ) $, $\exists i\in
\left[n\right]$ satisfies $\left ( \mathcal{A}_{1}\cdot x \right
)_{i} \left ( \mathcal{A}_{2}\cdot x \right )_{i} >\theta $; the
condition of item 5 is equivalent to that $\forall \mathcal{Z} \in
S^{\left [ m-1, n  \right ] } $ with its principal diagonal elements
being not all zeros, $\exists i\in \left[n\right]$ satisfies $\left
( \mathcal{A}_{1} \mathcal{Z} \right )_{i} \left ( \mathcal{A}_{2}
\mathcal{Z} \right )_{i} >\theta $.

\end{remark}

\begin{proposition} \label{pro:1}
type $VE$ $\subseteq$  type  $VP$ $\subseteq$ type  $VR_{0}$; type
$VP$-$\uppercase\expandafter{\romannumeral2}$ $\subseteq$  type $VP$
$\subseteq$ type
$VP$-$\uppercase\expandafter{\romannumeral1}$.
\end{proposition}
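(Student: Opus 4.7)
The plan is to dispatch the four inclusions separately, each by unpacking the definitions and Remark~1 directly; no auxiliary machinery is needed.

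First, for $VE\subseteq VP$ and $VP\subseteq VP$-$\uppercase\expandafter{\romannumeral1}$ I would simply observe that the defining implication of $VP$ has a hypothesis strictly stronger than that of $VE$ (it adds the extra clause $\mathcal{A}_{1}\cdot x\vee\mathcal{A}_{2}\cdot x\ge\theta$), while $VP$-$\uppercase\expandafter{\romannumeral1}$ only demands the $VP$-conclusion on the strictly smaller domain $(R_{+}^{n}/\{\theta\})\cup(R_{-}^{n}/\{\theta\})$. So any $\tilde{\mathcal{A}}$ that is $VE$ forces $x=\theta$ on the larger test set and hence is $VP$; and any $\tilde{\mathcal{A}}$ that is $VP$ forces $x=\theta$ in particular when $x$ lies in the sign-definite cones, and hence is $VP$-$\uppercase\expandafter{\romannumeral1}$.

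Second, for $VP\subseteq VR_{0}$ I would argue as follows. Suppose $\mathcal{A}_{1}\cdot x\wedge\mathcal{A}_{2}\cdot x=\theta$. Then coordinatewise $\min\{(\mathcal{A}_{1}\cdot x)_{i},(\mathcal{A}_{2}\cdot x)_{i}\}=0$, which forces both $(\mathcal{A}_{1}\cdot x)_{i}\ge 0$ and $(\mathcal{A}_{2}\cdot x)_{i}\ge 0$ for every $i$. In particular the componentwise maximum is nonnegative, so the $VP$-hypothesis $\mathcal{A}_{1}\cdot x\wedge\mathcal{A}_{2}\cdot x\le\theta\le\mathcal{A}_{1}\cdot x\vee\mathcal{A}_{2}\cdot x$ holds and $VP$ yields $x=\theta$, which is precisely the $VR_{0}$-implication.

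Third, the only inclusion with any content is $VP$-$\uppercase\expandafter{\romannumeral2}\subseteq VP$, which I would prove by contrapositive. Given $x\ne\theta$ with $\mathcal{A}_{1}\cdot x\wedge\mathcal{A}_{2}\cdot x\le\theta\le\mathcal{A}_{1}\cdot x\vee\mathcal{A}_{2}\cdot x$, form the symmetric tensor $\mathcal{Z}\in S^{[m-1,n]}$ with entries $z_{i_{1}\cdots i_{m-1}}=x_{i_{1}}x_{i_{2}}\cdots x_{i_{m-1}}$. Its principal diagonal entries are $x_{i}^{m-1}$, and at least one is nonzero because $x\ne\theta$. Plugging this $\mathcal{Z}$ into the definition of the product $\mathcal{A}\mathcal{B}$ recalled in Section~2 collapses the sum to the tensor-vector product, giving $\mathcal{A}_{k}\mathcal{Z}=\mathcal{A}_{k}x^{m-1}=\mathcal{A}_{k}\cdot x$ for $k=1,2$. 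Thus $\mathcal{Z}$ witnesses failure of the $VP$-$\uppercase\expandafter{\romannumeral2}$-condition, contradicting the hypothesis that $\tilde{\mathcal{A}}$ is of type $VP$-$\uppercase\expandafter{\romannumeral2}$.

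The main (small) obstacle is the observation in the last paragraph: one has to notice that the rank-one symmetric tensor built directly from $x$ has exactly the right shape for the product $\mathcal{A}_{k}\mathcal{Z}$ to collapse to $\mathcal{A}_{k}\cdot x$, which is what bridges the vector-level $VP$-condition and the tensor-level $VP$-$\uppercase\expandafter{\romannumeral2}$-condition. Everything else reduces to reading the definitions carefully.
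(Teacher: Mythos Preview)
Your proof is correct and follows the same elementary definition-unpacking approach as the paper. In fact you are more thorough: for the chain $VP\text{-}\uppercase\expandafter{\romannumeral2}\subseteq VP\subseteq VP\text{-}\uppercase\expandafter{\romannumeral1}$ the paper simply asserts it ``clearly holds,'' whereas you supply the rank-one symmetric tensor $\mathcal{Z}=x^{\otimes(m-1)}$ that bridges the vector-level $VP$ condition and the tensor-level $VP\text{-}\uppercase\expandafter{\romannumeral2}$ condition---precisely the missing detail.
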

\begin{proof}
Assume that $\tilde{\mathcal{A} }$ is of type $VE$, but not type
$VP$. Then there is a vector $x \ne \theta$ such that
    \begin{equation}\label{eq:1.0}
    \mathcal{A}_{1}\cdot x \wedge \mathcal{A}_{2}\cdot x\le\theta\le \mathcal{A}_{1}\cdot x \vee  \mathcal{A}_{2}\cdot x.
    \end{equation}
By the left inequality in \eqref{eq:1.0}, $\tilde{\mathcal{A} }$ is
not type $VE$. This contradicts the assumption.

Assume that $\tilde{\mathcal{A} }$ is of type  $VP$, but not type
$VR_{0}$. Then there is a vector $x \ne \theta$ such that
$\mathcal{A}_{1}\cdot x \wedge \mathcal{A}_{2}\cdot x=\theta. $
Furthermore,
\begin{equation*}
        \mathcal{A}_{1}\cdot x \wedge \mathcal{A}_{2}\cdot x=\theta\le \mathcal{A}_{1}\cdot x \vee  \mathcal{A}_{2}\cdot x,
\end{equation*}
which implies that $\tilde{\mathcal{A} }$ is not type $VP$. This
contradicts the assumption.

Thus, based on the above discussion, we can draw a conclusion that
type $VE$ $\subseteq$ type $VP$ $\subseteq$ type $VR_{0}$.

Similarly, type $VP$-$\uppercase\expandafter{\romannumeral2}$
$\subseteq$  type  $VP$ $\subseteq$ type
$VP$-$\uppercase\expandafter{\romannumeral1}$ clearly holds.
\end{proof}

Examples 3.1-3.3 are given to illustrate three aspects:
$\left(1\right)$ the existence of type $VE$; $\left(2\right)$ type
$VP$ is not necessarily type $VE$; $\left(3\right)$ type  $VR_{0}$
is not necessarily type $VP$.

\begin{example} \label{ex:3.1}
    Let
    $\tilde{\mathcal{A} }$ = $\left \{ \mathcal{A}_{1}, \mathcal{A}_{2}   \right \}$, where
    \begin{gather*}
        \mathcal{A}_{1}\left ( 1, :, : \right )  =\begin{pmatrix}
            -1  & 3\\0
              &0
            \end{pmatrix},~\mathcal{A}_{1}\left ( 2, :, : \right )=\begin{pmatrix}
             1 & 0\\-3
              &1
            \end{pmatrix},
        \\\mathcal{A}_{2}\left ( 1, :, : \right )  =\begin{pmatrix}
                0  & 2\\1
                  &0
                \end{pmatrix},~\mathcal{A}_{2}\left ( 2, :, : \right )=\begin{pmatrix}
                 2 & -1\\-2
                  &1
            \end{pmatrix}.
        \end{gather*}
        By the calculation, we obtain
        \begin{equation*}
            \mathcal{A}_{1}x^{2} = \begin{pmatrix}
                -x_{1}^{2} +3x_{1}x_{2}    \\x_{1}^{2} -3x_{1}x_{2}+x_{2}^{2}

                \end{pmatrix},~
            \mathcal{A}_{2}x^{2} = \begin{pmatrix}
                    3x_{1}x_{2}    \\2x_{1}^{2} -3x_{1}x_{2}+x_{2}^{2}

                \end{pmatrix}.
            \end{equation*}
Suppose $x\ne\theta$, we discuss three cases: $-x_{1}^{2}
+3x_{1}x_{2}>0$, $-x_{1}^{2} +3x_{1}x_{2}=0$ and $-x_{1}^{2}
+3x_{1}x_{2}<0$.
\begin{enumerate}
    \item [(1)] if $-x_{1}^{2} +3x_{1}x_{2}>0$, then $3x_{1}x_{2}>0; $
    \item [(2)] if $-x_{1}^{2} +3x_{1}x_{2}=0$, then $x_{2}\ne 0, x_{1}^{2} -3x_{1}x_{2}+x_{2}^{2}>0$ and $2x_{1}^{2} -3x_{1}x_{2}+x_{2}^{2}>0; $
    \item [(3)] if $-x_{1}^{2} +3x_{1}x_{2}<0$, then $x_{1}^{2} -3x_{1}x_{2}+x_{2}^{2}>0$ and $2x_{1}^{2} -3x_{1}x_{2}+x_{2}^{2}>0. $
\end{enumerate}
From the above analysis, $\tilde{\mathcal{A} }$ is of type $VE$.
\end{example}

\begin{example} \label{ex:3.2}
    Let
    $\tilde{\mathcal{A} } = \left \{ \mathcal{A}_{1}, \mathcal{A}_{2}   \right \}$, where
    \begin{gather*}
        \mathcal{A}_{1}\left ( 1, 1, :, : \right )  =\begin{pmatrix}
            1  & 2\\-2
              &0
            \end{pmatrix},~\mathcal{A}_{1}\left (1, 2, :, : \right )=\begin{pmatrix}
             0 & 0\\0
              &0
            \end{pmatrix},
        \\\mathcal{A}_{1}\left ( 2, 1, :, : \right )  =\begin{pmatrix}
            0  & -2\\1
              &0
            \end{pmatrix},~\mathcal{A}_{1}\left (2, 2, :, : \right )=\begin{pmatrix}
             2 & 0\\0
              &1
            \end{pmatrix},
        \\\mathcal{A}_{2}\left ( 1, 1, :, : \right )  =\begin{pmatrix}
            1  & 0\\0
              &1
            \end{pmatrix},~\mathcal{A}_{2}\left (1, 2, :, : \right )=\begin{pmatrix}
             0 & 1\\2
              &0
            \end{pmatrix},
        \\\mathcal{A}_{2}\left ( 2, 1, :, : \right )  =\begin{pmatrix}
            0  & 1\\1
              &0
            \end{pmatrix},~\mathcal{A}_{2}\left (2, 2, :, : \right )=\begin{pmatrix}
             0 & 0\\0
              &1
            \end{pmatrix}.
    \end{gather*}
    By the calculation, we obtain
   \begin{gather*}
        \mathcal{A}_{1}x^{3} = \begin{pmatrix}
            x_{1}^{3}   \\x_{1}^{2}x_{2}+x_{2}^{3}
            \end{pmatrix},~
        \mathcal{A}_{2}x^{3} = \begin{pmatrix}
            x_{1}^{3} +4x_{1}x_{2}^{2} \\x_{2}^{3} +2x_{1}^{2}x_{2}
            \end{pmatrix}.
    \end{gather*}
Suppose $x\ne\theta$, we discuss two cases: $x_{1}\ne 0$ and
$x_{1}=0$.
\begin{enumerate}
    \item [(1)] if $x_{1}\ne 0$, then $\left ( \mathcal{A}_{1}\cdot x \right )_{1} \left ( \mathcal{A}_{2}\cdot x \right )_{1}=x_{1}^{6} +4x_{1}^{4}x_{2}^{2}> 0; $
    \item [(2)] if $x_{1}=0$, then $x_{2}\ne 0 $, $\left ( \mathcal{A}_{1}\cdot x \right )_{2} \left ( \mathcal{A}_{2}\cdot x \right )_{2}=x_{2}^{6} +3x_{1}^{2}x_{2}^{4}+2x_{1}^{4}x_{2}^{2}> 0. $
\end{enumerate}
From the above analysis, $\tilde{\mathcal{A} }$ is of type $VP$. Let
$x=(-1,-1)^{T}$. Then $\left ( \mathcal{A}_{1}x^{3} \right
)_{1}=-1$, $\left ( \mathcal{A}_{2}x^{3} \right )_{1}=-5$, $\left (
\mathcal{A}_{1}x^{3} \right )_{2}=-2$, $\left ( \mathcal{A}_{2}x^{3}
\right )_{2}=-3$.
  So, $\tilde{\mathcal{A} }$ is not of type $VE$.
\end{example}

\begin{example}
    Let
    $\tilde{\mathcal{A} }$ = $\left \{ \mathcal{A}_{1}, \mathcal{A}_{2}   \right \}$, where
    \begin{gather*}
        \mathcal{A}_{1}\left ( 1, :, : \right )  =\begin{pmatrix}
            0 & 1\\-1
              &1
            \end{pmatrix},~\mathcal{A}_{1}\left ( 2, :, : \right )=\begin{pmatrix}
             1 & 0\\0
              &1
            \end{pmatrix},
        \\\mathcal{A}_{2}\left ( 1, :, : \right )  =\begin{pmatrix}
                0 & -2\\1
                  &1
                \end{pmatrix},~\mathcal{A}_{2}\left ( 2, :, : \right )=\begin{pmatrix}
                -1 & 0\\-2
                  &-1
            \end{pmatrix}.
        \end{gather*}
        By the calculation, we obtain
        \begin{equation*}
            \mathcal{A}_{1}x^{2} = \begin{pmatrix}
                x_{2}^{2}    \\x_{1}^{2} +x_{2}^{2}

                \end{pmatrix},~
            \mathcal{A}_{2}x^{2} = \begin{pmatrix}
                    x_{2}^{2}-x_{1}x_{2}    \\-x_{1}^{2}-2x_{1}x_{2} -x_{2}^{2}

                \end{pmatrix}.
            \end{equation*}
Suppose $x\ne\theta$, we discuss two cases: $x_{2}^{2}-x_{1}x_{2}>0$
and $x_{2}^{2}-x_{1}x_{2}=0$.
\begin{enumerate}
    \item [(1)] if $x_{2}^{2}-x_{1}x_{2}>0$, then $x_{2}\ne0$, $x_{2}^2>0; $
    \item [(2)] if $x_{2}^{2}-x_{1}x_{2}=0$, then $x_{2}= 0$ or $x_{2}=x_{1}\ne0$. If $x_{2}= 0$, then $x_{1}^{2} +x_{2}^{2}>0$ and $-x_{1}^{2}-2x_{1}x_{2} -x_{2}^{2}<0$. If $x_{2}=x_{1}\ne0$, then $x_{1}^{2} +x_{2}^{2}>0$ and $-x_{1}^{2}-2x_{1}x_{2} -x_{2}^{2}<0$.
\end{enumerate}
From the above analysis, the corresponding system \eqref{eq:VR} has
no solution. So, $\tilde{\mathcal{A} }$ is of type $VR_{0}$. Let
$x=(1,1)^{T}$. Then $\left ( \mathcal{A}_{1}\cdot x \right )_{1}
\left ( \mathcal{A}_{2}\cdot x \right )_{1}=0$, $\left (
\mathcal{A}_{1}\cdot x \right )_{2} \left ( \mathcal{A}_{2}\cdot x
\right )_{2}=-8$. Thus, $\tilde{\mathcal{A} }$ is not of type $VP$.
\end{example}
Examples 3.4 and 3.5 are given to illustrate these three aspects:
$\left(1\right)$ the existence of type
$VP$-$\uppercase\expandafter{\romannumeral2}$; $\left(2\right)$ type
$VP$ is not necessarily type
$VP$-$\uppercase\expandafter{\romannumeral2}$; $\left(3\right)$ type
$VP$-$\uppercase\expandafter{\romannumeral1}$ is not necessarily
type $VP$.
\begin{example}
Let $\mathcal{A}_{1}$, $\mathcal{A}_{2} \in R^{\left[3, 2\right]}$,
where
    \begin{gather*}
        \mathcal{A}_{1}\left ( 1, :, : \right )  =\begin{pmatrix}
            1 & 0\\0
              &1
            \end{pmatrix},~\mathcal{A}_{1}\left ( 2, :, : \right )=\begin{pmatrix}
             1 & 1\\-1
              &0
            \end{pmatrix},
        \\\mathcal{A}_{2}\left ( 1, :, : \right )  =\begin{pmatrix}
                1 & 1\\-1
                  &1
                \end{pmatrix},~\mathcal{A}_{2}\left ( 2, :, : \right )=\begin{pmatrix}
                0 & 0\\0
                  &-1
            \end{pmatrix}.
        \end{gather*}
        Let $\mathcal{Z} \in S^{\left[2, 2\right]}$ be an arbitrary symmetric tensor and be of form
        \begin{gather*}
            \mathcal{Z}=\begin{pmatrix}
                x_{1} & x_{2}\\x_{2}
                  &x_{3}
                \end{pmatrix},
        \end{gather*}
        where $x_{1}$ and $x_{3}$ at least one is not zero. By the calculation, we obtain
        \begin{equation*}
            \mathcal{A}_{1}\mathcal{Z} = \begin{pmatrix}
                x_{1}+x_{3}    \\x_{1}

                \end{pmatrix},~
            \mathcal{A}_{2}\mathcal{Z} = \begin{pmatrix}
                x_{1}+x_{3}     \\-x_{3}

                \end{pmatrix}.
            \end{equation*}
            Obviously, at least one of $\left(x_{1}+x_{3}\right)^{2}$ and $-x_{1}x_{3}$ is greater than zero. Thus, $\tilde{\mathcal{A} }= \left \{ \mathcal{A}_{1}, \mathcal{A}_{2}   \right \}$ is of type $VP$-$\uppercase\expandafter{\romannumeral2}$.
\end{example}
In Example \ref{ex:3.2}, let
\begin{gather*}
    \mathcal{Z}\left ( 1, :, : \right )=\begin{pmatrix}
        0 & -1\\-1
          &0
        \end{pmatrix},~
        \mathcal{Z}\left ( 2, :, : \right )=\begin{pmatrix}
            -1 & 0\\0
              &1
            \end{pmatrix}.
\end{gather*}
Then $\left ( \mathcal{A}_{1} \mathcal{Z} \right )_{i} \left ( \mathcal{A}_{2} \mathcal{Z} \right )_{i} =0 $ for all $i\in \left\{1, 2\right\}$. Thus, $\tilde{\mathcal{A} }$ = $\left \{ \mathcal{A}_{1}, \mathcal{A}_{2}   \right \}$ in Example \ref{ex:3.2} is not of type $VP$-$\uppercase\expandafter{\romannumeral2}$.
\begin{example}
    Let $\mathcal{A}_{1}$, $\mathcal{A}_{2} \in R^{\left[3, 2\right]}$, where
    \begin{gather*}
        \mathcal{A}_{1}\left ( 1, :, : \right )  =\begin{pmatrix}
            1 & 0\\0
              &0
            \end{pmatrix},~\mathcal{A}_{1}\left ( 2, :, : \right )=\begin{pmatrix}
             1 & 0\\0
              &1
            \end{pmatrix},
        \\\mathcal{A}_{2}\left ( 1, :, : \right )  =\begin{pmatrix}
                0 & 2\\-1
                  &0
                \end{pmatrix},~\mathcal{A}_{2}\left ( 2, :, : \right )=\begin{pmatrix}
                1 & 3\\0
                  &1
            \end{pmatrix}.
        \end{gather*}
        By the calculation, we obtain
        \begin{equation*}
            \mathcal{A}_{1}x^{2} = \begin{pmatrix}
                x_{2}^{2}    \\x_{1}^{2} +x_{2}^{2}

                \end{pmatrix},~
            \mathcal{A}_{2}x^{2} = \begin{pmatrix}
                    x_{1}x_{2}    \\x_{1}^{2}+3x_{1}x_{2}+x_{2}^2

                \end{pmatrix}.
            \end{equation*}
Obviously, $\mathcal{A}_{1}\cdot x \wedge \mathcal{A}_{2}\cdot
x\le\theta\le \mathcal{A}_{1}\cdot x \vee  \mathcal{A}_{2}\cdot x $
have no solution in $\left ( R_{+}^{n}/\left\{\theta \right\}
\right)     \cup  \left ( R_{-}^{n}/\left\{\theta \right\} \right
)$. So, $\tilde{\mathcal{A} } = \left \{ \mathcal{A}_{1},
\mathcal{A}_{2} \right \}$ is of type
$VP$-$\uppercase\expandafter{\romannumeral1}$. Let $x=( 1,-1)^{T}$.
Then $\left ( \mathcal{A}_{1}\cdot x \right )_{1} \left (
\mathcal{A}_{2}\cdot x \right )_{1}=-1$, $\left (
\mathcal{A}_{1}\cdot x \right )_{2} \left ( \mathcal{A}_{2}\cdot x
\right )_{2}=-2$. Thus, $\tilde{\mathcal{A} } = \left \{
\mathcal{A}_{1}, \mathcal{A}_{2}   \right \}$ is not of type $VP$.
\end{example}
Next, a special tensor set is introduced. Inspired by the
$P$-function \cite{kanzow1996equivalence}, the following condition
is given

\begin{equation}\label{condition:1}
    \underset{i\in \left [ n \right ] }{\text{max}} \left ( G\left ( x \right )-G\left ( y \right )   \right )_{i} \left ( F\left ( x \right )-F\left ( y \right )   \right )_{i}>0
\end{equation}
for all $x,y\in R^{n}$ and $x\ne y$,
where $G\left ( x \right )$, $F\left ( x \right )$ are two functions. Now, we define a new tensor set, see Definition \ref{D:3.2}.
\begin{definition}\label{D:3.2}
We say that $\tilde{\mathcal{A} }=\left \{ \mathcal{A}_{1},
\mathcal{A}_{2}   \right \}$ is of type strong $VP$ if the mappings
$G\left(x\right):=\mathcal{A}_{1} x^{m-1}$ and
$F\left(x\right):=\mathcal{A}_{2} x^{m-1}$ satisfy the condition \eqref{condition:1} for all
$x,y\in R^{n}$ and $x\ne y$.
\end{definition}
\begin{proposition}
There is no type strong $VP$ tensor set with odd order.
\end{proposition}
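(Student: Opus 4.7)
The plan is to argue by contradiction, exploiting a parity observation about the exponent $m-1$. Suppose $\tilde{\mathcal{A}} = \{\mathcal{A}_1, \mathcal{A}_2\}$ is of type strong $VP$ with $m$ odd. Since $m$ is odd, $m-1$ is even, and therefore each component of $\mathcal{A}_j x^{m-1}$ is a homogeneous polynomial of even degree $m-1$ in the entries of $x$. In particular, replacing $x$ by $-x$ in the definition
$$(\mathcal{A}_j x^{m-1})_i = \sum_{i_2,\dots,i_m} (a_j)_{ii_2\cdots i_m} x_{i_2}\cdots x_{i_m}$$
produces a factor of $(-1)^{m-1} = 1$, so $\mathcal{A}_j(-x)^{m-1} = \mathcal{A}_j x^{m-1}$ for $j = 1, 2$.

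Next, I would pick any nonzero $x \in \mathbb{R}^n$ and set $y := -x$. Then $x \neq y$, but by the previous step, for both $G(x) = \mathcal{A}_1 x^{m-1}$ and $F(x) = \mathcal{A}_2 x^{m-1}$, one has $G(x) - G(y) = \theta$ and $F(x) - F(y) = \theta$. Consequently, for every index $i \in [n]$,
$$(G(x) - G(y))_i (F(x) - F(y))_i = 0,$$
so the maximum in condition \eqref{condition:1} evaluates to $0$, not a strictly positive number. This contradicts the strong $VP$ assumption applied to the pair $(x, -x)$, completing the proof.

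There is no real obstacle here: the argument rests entirely on the elementary fact that the map $x \mapsto \mathcal{A}_j x^{m-1}$ is even in $x$ whenever $m-1$ is even, and the strong $VP$ inequality is required to hold for \emph{every} pair $x \neq y$, so the single test pair $(x, -x)$ with $x \neq \theta$ is enough to defeat it. No structural hypothesis on $\mathcal{A}_1$ or $\mathcal{A}_2$ is used beyond the given order $m$, which is why the conclusion is universal for all odd $m$.
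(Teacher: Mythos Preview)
Your proof is correct and matches the paper's approach exactly: both argue by contradiction, observe that $m-1$ is even so $\mathcal{A}_j(-x)^{m-1}=\mathcal{A}_j x^{m-1}$, and test the strong $VP$ condition on the pair $(x,-x)$ with $x\neq\theta$ to obtain a zero maximum instead of a strictly positive one.
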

\begin{proof}
Assume that $\tilde{\mathcal{A} }$ = $\left \{ \mathcal{A}_{1},
\mathcal{A}_{2}   \right \}$ is of type strong $VP$ with
$\mathcal{A}_{1}, \mathcal{A}_{2}  \in R^{\left [ m, n  \right ] }$,
and $m$ is an odd number. Obviously,
    \begin{equation*}
        \mathcal{A}_{1}\left(x^{m-1}-\left(-x\right)^{m-1}\right)=\mathcal{A}_{2}\left(x^{m-1}-\left(-x\right)^{m-1}\right)=\theta~for~any~x\ne \theta.
    \end{equation*}
    This contradicts the assumption.
\end{proof}
\begin{proposition}\label{pro:3.3}
type $VP$-$\uppercase\expandafter{\romannumeral2}$ with even order
$\subseteq$ type strong $VP$ $\subseteq$ type
$VP$.
\end{proposition}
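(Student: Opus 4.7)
The plan is to prove the two inclusions separately, and the easier one is the second. For \emph{type strong} $VP$ $\subseteq$ \emph{type} $VP$, I would simply specialize the defining inequality of strong $VP$ by setting $y=\theta$. Given any $x\ne\theta$, the strong $VP$ condition forces some index $i$ with $(\mathcal{A}_{1} x^{m-1})_{i}(\mathcal{A}_{2} x^{m-1})_{i}>0$, which by the equivalent reformulation of item 3 in the Remark is exactly the definition of type $VP$.

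For the first inclusion \emph{type} $VP$-$\uppercase\expandafter{\romannumeral2}$ with even order $\subseteq$ \emph{type strong} $VP$, the key idea is to absorb the difference $\mathcal{A}_{k}x^{m-1}-\mathcal{A}_{k}y^{m-1}$ into a product of the form $\mathcal{A}_{k}\mathcal{Z}$ for a suitable symmetric tensor $\mathcal{Z}\in S^{[m-1,n]}$. Given $x\ne y$, I would set
\begin{equation*}
\mathcal{Z}_{i_{2}\cdots i_{m}}:=x_{i_{2}}\cdots x_{i_{m}}-y_{i_{2}}\cdots y_{i_{m}},
\end{equation*}
which is patently invariant under permutations of $(i_{2},\ldots,i_{m})$, so $\mathcal{Z}\in S^{[m-1,n]}$. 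By the very definition of the product $\mathcal{A}\mathcal{B}$ introduced in Section~2, one checks directly that $\mathcal{A}_{k}\mathcal{Z}=\mathcal{A}_{k}x^{m-1}-\mathcal{A}_{k}y^{m-1}$ for $k=1,2$.

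The subtle step, and the place where the \emph{even order} hypothesis is used, is verifying that the principal diagonal of $\mathcal{Z}$ is not identically zero. The $i$-th diagonal entry is $x_{i}^{m-1}-y_{i}^{m-1}$. Since $m$ is even, $m-1$ is odd, so $t\mapsto t^{m-1}$ is strictly increasing on $\mathbb{R}$; hence $x_{i}^{m-1}=y_{i}^{m-1}$ forces $x_{i}=y_{i}$. From $x\ne y$ we get some $i$ with $x_{i}\ne y_{i}$, so $\mathcal{Z}_{i\cdots i}\ne 0$. This is precisely the point where the argument would fail for odd $m$.

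With $\mathcal{Z}$ admissible for the $VP$-$\uppercase\expandafter{\romannumeral2}$ hypothesis, the equivalent reformulation in the Remark yields some index $i$ with $(\mathcal{A}_{1}\mathcal{Z})_{i}(\mathcal{A}_{2}\mathcal{Z})_{i}>0$. Substituting the identity $\mathcal{A}_{k}\mathcal{Z}=\mathcal{A}_{k}x^{m-1}-\mathcal{A}_{k}y^{m-1}$ gives
\begin{equation*}
\bigl(\mathcal{A}_{1}x^{m-1}-\mathcal{A}_{1}y^{m-1}\bigr)_{i}\bigl(\mathcal{A}_{2}x^{m-1}-\mathcal{A}_{2}y^{m-1}\bigr)_{i}>0,
\end{equation*}
hence the maximum over $i$ is strictly positive, which is the defining condition \eqref{condition:1} of type strong $VP$. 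The main technical obstacle is really just the bookkeeping to confirm that $\mathcal{A}_{k}\mathcal{Z}$ equals the desired difference under the generalized product defined in Section~2; beyond that the argument is a one-line invocation of item 5 of the Remark.
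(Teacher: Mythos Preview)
Your argument is correct on both inclusions. The paper, however, states Proposition~\ref{pro:3.3} without proof, so there is no authors' argument to compare against; your write-up fills in exactly the details one would expect. The construction $\mathcal{Z}_{i_{2}\cdots i_{m}}=x_{i_{2}}\cdots x_{i_{m}}-y_{i_{2}}\cdots y_{i_{m}}$ is the natural choice (and indeed the paper later uses the same object, written $\mathcal{Z}=x^{m-1}-y^{m-1}$, in the proof of Theorem~\ref{the: un}), and your observation that $m$ even makes $t\mapsto t^{m-1}$ injective is precisely why the diagonal of $\mathcal{Z}$ is not identically zero. The specialization $y=\theta$ for the second inclusion is likewise the obvious move. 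Nothing is missing.
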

Since there is no type strong $VP$ tensor set with odd order,
$\tilde{\mathcal{A} }$ in Example \ref{ex:3.1} is of type $VP$, but
not type strong $VP$. Examples 3.6 and 3.7 are given to illustrate
three aspects: $\left(1\right)$ the existence of type strong $VP$;
$\left(2\right)$ type strong $VP$ is not necessarily type
$VP$-$\uppercase\expandafter{\romannumeral2}$ with even order;
$\left(3\right)$ type $VP$ with even order is not necessarily type
strong $VP$.
\begin{example}\label{ex:sp}
    Let
    $\tilde{\mathcal{A} } = \left \{ \mathcal{A}_{1}, \mathcal{A}_{2}   \right \}$, where
    \begin{gather*}
        \mathcal{A}_{1}\left ( 1, 1, :, : \right )  =\begin{pmatrix}
            1  & 1\\-2
              &1
            \end{pmatrix},~\mathcal{A}_{1}\left (1, 2, :, : \right )=\begin{pmatrix}
             1 & -1\\0
              &0
            \end{pmatrix},
        \\\mathcal{A}_{1}\left ( 2, 1, :, : \right )  =\begin{pmatrix}
            0  & 0\\1
              &0
            \end{pmatrix},~\mathcal{A}_{1}\left (2, 2, :, : \right )=\begin{pmatrix}
             0 & -1\\1
              &1
            \end{pmatrix},
        \\\mathcal{A}_{2}\left ( 1, 1, :, : \right )  =\begin{pmatrix}
            1  & 0\\0
              &0
            \end{pmatrix},~\mathcal{A}_{2}\left (1, 2, :, : \right )=\begin{pmatrix}
             0 & 0\\0
              &0
            \end{pmatrix},
        \\\mathcal{A}_{2}\left ( 2, 1, :, : \right )  =\begin{pmatrix}
            0  & -1\\1
              &0
            \end{pmatrix},~\mathcal{A}_{2}\left (2, 2, :, : \right )=\begin{pmatrix}
             0 & 0\\0
              &1
            \end{pmatrix}.
    \end{gather*}
    By the calculation, we obtain
        \begin{equation*}
            \mathcal{A}_{1}x^{3}-\mathcal{A}_{1}y^{3} = \begin{pmatrix}
                x_{1}^{3}-y_{1}^{3}    \\x_{1}^{2}x_{2} -y_{1}^{2}y_{2}+x_{2}^{3}-y_{2}^{3}

                \end{pmatrix},~
                \mathcal{A}_{2}x^{3}-\mathcal{A}_{2}y^{3} = \begin{pmatrix}
                    x_{1}^{3}-y_{1}^{3}   \\x_{2}^{3}-y_{2}^{3}

                \end{pmatrix}.
            \end{equation*}
            Suppose $x\ne y$, we discuss two cases: $x_{1}\ne y_{1}$ and $x_{1}= y_{1}$.
\begin{enumerate}
    \item [(1)] if $x_{1}\ne y_{1}$, then $\left ( \mathcal{A}_{1}x^{3}-\mathcal{A}_{1}y^{3} \right )_{1} \left ( \mathcal{A}_{2}x^{3}-\mathcal{A}_{2}y^{3}\right )_{1}=\left(x_{1}^{3}-y_{1}^{3}\right)^{2}> 0; $
    \item [(2)] if $x_{1}= y_{1}$, then $x_{2}\ne y_{2}$ and $\left ( \mathcal{A}_{1}x^{3}-\mathcal{A}_{1}y^{3} \right )_{2} \left ( \mathcal{A}_{2}x^{3}-\mathcal{A}_{2}y^{3}\right )_{2}=\left(x_{1}^{2}x_{2} -y_{1}^{2}y_{2}\right)\left(x_{2}^{3}-y_{2}^{3}\right)+\left(x_{2}^{3}-y_{2}^{3}\right)^{2}> 0. $
\end{enumerate}
From the above analysis, $\tilde{\mathcal{A} }$ = $\left \{ \mathcal{A}_{1}, \mathcal{A}_{2}   \right \}$ is of type strong $VP$. Let
\begin{equation*}
    \mathcal{Z}\left ( 1, :, : \right )  =\begin{pmatrix}
        0 & -1\\-1
          &0
        \end{pmatrix},~\mathcal{Z}\left ( 2, :, : \right )=\begin{pmatrix}
         -1 & 0\\0
          &1
        \end{pmatrix}.
\end{equation*}
Then $\left ( \mathcal{A}_{1}\mathcal{Z} \right)_{1}\left (
\mathcal{A}_{2}\mathcal{Z}\right )_{1}=0$, $\left (
\mathcal{A}_{1}\mathcal{Z} \right)_{2}\left (
\mathcal{A}_{2}\mathcal{Z}\right )_{2}=0$. Thus, $\tilde{\mathcal{A}
} = \left \{ \mathcal{A}_{1}, \mathcal{A}_{2}   \right \}$ is not
type $VP$-$\uppercase\expandafter{\romannumeral2}$ with even order.
\end{example}
\begin{example}
    Let
    $\tilde{\mathcal{A} } = \left \{ \mathcal{A}_{1}, \mathcal{A}_{2}   \right \}$, where
    \begin{gather*}
        \mathcal{A}_{1}\left ( 1, 1, :, : \right )  =\begin{pmatrix}
            0  & -1\\2
              &0
            \end{pmatrix},~\mathcal{A}_{1}\left (1, 2, :, : \right )=\begin{pmatrix}
             0 & 0\\0
              &0
            \end{pmatrix},
        \\\mathcal{A}_{1}\left ( 2, 1, :, : \right )  =\begin{pmatrix}
            1 & 0\\0
              &0
            \end{pmatrix},~\mathcal{A}_{1}\left (2, 2, :, : \right )=\begin{pmatrix}
             0 & 0\\0
              &1
            \end{pmatrix},
        \\\mathcal{A}_{2}\left ( 1, 1, :, : \right )  =\begin{pmatrix}
            0  & 1\\1
              &0
            \end{pmatrix},~\mathcal{A}_{2}\left (1, 2, :, : \right )=\begin{pmatrix}
             -1 & 0\\0
              &0
            \end{pmatrix},
        \\\mathcal{A}_{2}\left ( 2, 1, :, : \right )  =\begin{pmatrix}
            1 & -1\\1
              &0
            \end{pmatrix},~\mathcal{A}_{2}\left (2, 2, :, : \right )=\begin{pmatrix}
             0 & 0\\0
              &1
            \end{pmatrix}.
    \end{gather*}
    By the calculation, we obtain
        \begin{equation*}
            \mathcal{A}_{1}x^{3} = \begin{pmatrix}
                x_{1}^{2}x_{2}    \\x_{1}^{3} +x_{2}^{3}

                \end{pmatrix},~
            \mathcal{A}_{2}x^{3} = \begin{pmatrix}
                x_{1}^{2}x_{2}    \\x_{1}^{3} +x_{2}^{3}

                \end{pmatrix}.
            \end{equation*}
Suppose $x\ne\theta$, we discuss two cases: $x_{1}x_{2}\ne 0$ and
$x_{1}x_{2}= 0$.
\begin{enumerate}
    \item [(1)] if $x_{1}x_{2}\ne 0$, then $\left ( \mathcal{A}_{1}\cdot x \right )_{1} \left ( \mathcal{A}_{2}\cdot x \right )_{1}=x_{1}^{4}x_{2}^{2}>0 $;
    \item [(2)] if $x_{1}x_{2}= 0$, then  $\left ( \mathcal{A}_{1}\cdot x \right )_{2} \left ( \mathcal{A}_{2}\cdot x \right )_{2}=\left(x_{1}^{3} +x_{2}^{3}\right)^{2}>0 $.
\end{enumerate}
From the above analysis, $\tilde{\mathcal{A} }$ = $\left \{
\mathcal{A}_{1}, \mathcal{A}_{2}  \right \}$ is of type $VP$ with
even order. Let $x=( 0,1)^{T}$, $y=(1,0)^{T}$. Then
\[ \left (
\mathcal{A}_{1}x^{3}-\mathcal{A}_{1}y^{3} \right )_{1} \left (
\mathcal{A}_{2}x^{3}-\mathcal{A}_{2}y^{3}\right )_{1}= 0 ~and~\left
( \mathcal{A}_{1}x^{3}-\mathcal{A}_{1}y^{3} \right )_{2} \left (
\mathcal{A}_{2}x^{3}-\mathcal{A}_{2}y^{3}\right )_{2}=0.
\]
Thus, $\tilde{\mathcal{A}} =\left \{ \mathcal{A}_{1},
\mathcal{A}_{2}   \right \}$ is not of type strong $VP$.
\end{example}
\section{Solution of the VTCP}
In this section, we show that the solution set of the
VTCP$(\tilde{\mathcal{A}}, \tilde{q })$ is bounded under certain
conditions, and some sufficient conditions for the existence and
uniqueness of the solution of the VTCP$(\tilde{\mathcal{A}}, \tilde{q
})$ are obtained from the view of the degree theory and the equal
form of the minimum function.

To show that  the solution set of the VTCP$(\tilde{\mathcal{A}},
\tilde{q })$ is bounded under certain conditions, we give the
following result, see Theorem 4.1.

\begin{theorem}\label{thm: bound}
$\tilde{\mathcal{A} }$ is of type $VR_{0}$ if and only if  the solution
set of the VTCP$(\tilde{\mathcal{A}}, \tilde{q })$ for all $q_{1},
q_{2}\in R^{n} $ is bounded.
\begin{proof}
It is now proved that the following three conditions are equivalent:
\begin{enumerate}
            \item [$(i)$] $\tilde{\mathcal{A} }$ is of type $VR_{0}$;
            \item [$(ii)$] $\forall q_{1}, q_{2}\in R^{n}$, $t>0$, $s\in R$, the
\begin{align*}
        {} & \tau\left ( q_{1}, q_{2}, s, t \right )
        \\
={} & \left \{ x:q_{1}+\mathcal{A}_{1} x^{m-1} \ge\theta,
q_{2}+\mathcal{A}_{2} x^{m-1} \ge\theta , \left (
q_{1}+\mathcal{A}_{1} x^{m-1} \right )^{T} \left (
q_{2}+t\mathcal{A}_{2} x^{m-1} \right )\le s\right \}
\end{align*}
            is bounded;
            \item [$(iii)$] The solution set of the VTCP$(\tilde{\mathcal{A} }, \tilde{q })$ for all $q_{1}, q_{2}\in R^{n} $ is bounded.

            $(i)\Rightarrow (ii)$: Assume that there exist $q_{1}^{*}, q_{2}^{*}\in R^{n}$, $t^{*}>0$ and $s^{*}\in R$ such that the set $\tau\left ( q_{1}^{*}, q_{2}^{*}, s^{*}, t^{*} \right )$ is not bounded.
            Then  there is an unbounded sequence $\left \{ x^{k} \right \}\in \tau\left ( q_{1}^{*}, q_{2}^{*}, s^{*}, t^{*} \right )$.
            Since the sequence $\{\frac{x^{k}}{\| x^{k}\|}\}$ is bounded, there is a convergence subsequence $\{\frac{x^{k_{j}}}{\|x^{k_{j}}\|}\}$ such that when $j\rightarrow\infty$,
            $\frac{x^{k_{j}}}{\left \| x^{k_{j}} \right \| } \rightarrow x_{1}\ne\theta$ and $\left \| x^{k_{j}} \right \|\rightarrow \infty$. Thus, it follows that
            \begin{gather*}
                \mathcal{A}_{1}\! \left (\!\frac{x^{k^{j}}}{\left \|  x^{k^{j}}\right \| } \! \right )^{m-1}\!+\!\frac{q_{1}^{*}}{\left ( \!\left \|  x^{k^{j}}\right \| \!\right )^{m-1} }\ge \theta ,
                \mathcal{A}_{2} \left (\!\frac{x^{k^{j}}}{\left \|  x^{k^{j}}\right \| }  \!\right )^{m-1}\!+\!\frac{q_{2}^{*}}{\left ( \!\left \|  x^{k^{j}}\right \|\! \right )^{m-1} }\ge \theta ,
                \\\left [ \mathcal{A}_{1} \left (\! \frac{x^{k^{j}}}{\left \|  x^{k^{j}}\right \| } \! \right )^{m-1}\!+\!\frac{q_{1}^{*}}{\left (\! \left \|  x^{k^{j}}\right \| \!\right )^{m-1} }  \right ]^{T}\left [ t^{*}\mathcal{A}_{2} \left (\! \frac{x^{k^{j}}}{\left \|  x^{k^{j}}\right \| }   \!\right )^{m-1}\!+\!\frac{q_{2}^{*}}{\left ( \left \|  x^{k^{j}}\right \| \right )^{m-1} }  \right ]\le \frac{s^{*}}{\left ( \left \|  x^{k^{j}}\right \| \right )^{2m-2} }.
            \end{gather*}

    Let $j\rightarrow  +\infty$. Then
      \begin{equation*}
        \mathcal{A}_{1} \left (x_{1}   \right )^{m-1}\ge \theta, \mathcal{A}_{2} \left (x_{1}   \right )^{m-1}\ge \theta, \left [ \mathcal{A}_{1} \left (x_{1}   \right )^{m-1} \right ]^{T}\left [ t^{*}\mathcal{A}_{2} \left (x_{1}   \right )^{m-1} \right ]\le \theta.
      \end{equation*}
    Thus, $\left[ \mathcal{A}_{1} \left (x_{1}   \right )^{m-1} \right ]^{T}\left[\mathcal{A}_{2} \left (x_{1}   \right )^{m-1} \right ]= \theta$. This contradicts that $\tilde{\mathcal{A} }$ is of type $VR_{0}$.

        $(ii)\Rightarrow (iii)$: Let $s=0$, $t=1$. Then the set
        \begin{align*}
            {} & \tau\left ( q_{1}, q_{2}, 0, 1 \right )
            \\
            ={} & \left \{ x:q_{1}+\mathcal{A}_{1} x^{m-1} \ge0, q_{2}+\mathcal{A}_{2} x^{m-1} \ge0 , \left ( q_{1}+\mathcal{A}_{1} x^{m-1} \right )^{T} \left ( q_{2}+\mathcal{A}_{2} x^{m-1} \right )\le 0\right \}
            \\
            ={} & \left \{ x:q_{1}+\mathcal{A}_{1} x^{m-1} \ge0, q_{2}+\mathcal{A}_{2} x^{m-1} \ge0 , \left ( q_{1}+\mathcal{A}_{1} x^{m-1} \right )^{T} \left ( q_{2}+\mathcal{A}_{2} x^{m-1} \right )= 0\right \}
          \end{align*}
        is bounded, which implies that the solution set of the VTCP$( \tilde{\mathcal{A} }, \tilde{q} )$ is bounded.

        $(iii)\Rightarrow (i)$: Assume that $\tilde{\mathcal{A} }$ is not of type $VR_{0}$. There is $x\ne\theta$ such that $\mathcal{A}_{1}\cdot x \wedge \mathcal{A}_{2}\cdot x=\theta$. Hence, $kx\in\tau\left ( \theta, \theta, 0, 1 \right )$ for all $k\ge\theta$. This contradicts the condition $(iii)$.
      \end{enumerate}
By the above discussion, the conclusion in Theorem \ref{thm: bound}
holds.
     \end{proof}
\end{theorem}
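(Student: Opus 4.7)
The plan is to prove the theorem via a cycle of three implications, introducing an intermediate parametric set $\tau(q_1,q_2,s,t)$ that interpolates between the $VR_0$ condition and the VTCP solution set. Specifically, I would show $(i)\Rightarrow(ii)\Rightarrow(iii)\Rightarrow(i)$, where $(i)$ is the $VR_0$ property, $(ii)$ is boundedness of $\tau(q_1,q_2,s,t)$ for all $q_1,q_2\in R^n$, $t>0$, $s\in R$, and $(iii)$ is boundedness of the VTCP solution set for all $q_1,q_2$.

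The step I expect to be the main obstacle, and the one I would attack first, is $(i)\Rightarrow(ii)$. The plan here is a standard normalization/recession-direction argument: assume for contradiction that some $\tau(q_1^*,q_2^*,s^*,t^*)$ contains an unbounded sequence $\{x^k\}$, then pass to a subsequence so that $x^{k_j}/\|x^{k_j}\|\to \bar x\ne\theta$ while $\|x^{k_j}\|\to\infty$. The subtlety is the scaling: dividing each defining inequality by $(\|x^{k_j}\|)^{m-1}$ turns $\mathcal{A}_i x^{m-1}$ into $\mathcal{A}_i(x/\|x\|)^{m-1}$ and sends the $q_i^*$ terms to $\theta$; dividing the bilinear constraint by $(\|x^{k_j}\|)^{2m-2}$ sends the right-hand side $s^*$ to $0$. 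Taking limits yields $\mathcal{A}_1\bar x^{m-1}\ge\theta$, $\mathcal{A}_2\bar x^{m-1}\ge\theta$, and $(\mathcal{A}_1\bar x^{m-1})^T(\mathcal{A}_2\bar x^{m-1})\le 0$, which forces $\mathcal{A}_1\cdot\bar x\wedge \mathcal{A}_2\cdot\bar x=\theta$ with $\bar x\ne\theta$, contradicting $VR_0$. Here I would use the equivalent reformulation of $VR_0$ stated in the remark after Definition~\ref{def:1}.

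The implication $(ii)\Rightarrow(iii)$ is then essentially definitional: set $s=0$ and $t=1$; the nonnegativity of $q_1+\mathcal{A}_1 x^{m-1}$ and $q_2+\mathcal{A}_2 x^{m-1}$ combined with their inner product being $\le 0$ forces the inner product to equal $0$, so $\tau(q_1,q_2,0,1)$ coincides exactly with the VTCP solution set, which is therefore bounded.

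Finally, for $(iii)\Rightarrow(i)$, I would argue contrapositively: if $\tilde{\mathcal{A}}$ fails to be $VR_0$, there exists $x\ne\theta$ with $\mathcal{A}_1\cdot x\wedge \mathcal{A}_2\cdot x=\theta$; then, using homogeneity of $\mathcal{A}_i x^{m-1}$ in $x$, every scalar multiple $kx$ with $k\ge 0$ satisfies $\mathcal{A}_1 (kx)^{m-1}\ge\theta$, $\mathcal{A}_2(kx)^{m-1}\ge\theta$, and $(\mathcal{A}_1(kx)^{m-1})^T(\mathcal{A}_2(kx)^{m-1})=0$, so the whole ray $\{kx:k\ge 0\}$ lies in the solution set of the VTCP with $q_1=q_2=\theta$, contradicting $(iii)$. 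This closes the cycle and completes the proof.
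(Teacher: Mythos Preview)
Your proposal is correct and follows essentially the same approach as the paper: the same three-condition cycle $(i)\Rightarrow(ii)\Rightarrow(iii)\Rightarrow(i)$ with the identical intermediate set $\tau(q_1,q_2,s,t)$, the same normalization/recession argument for $(i)\Rightarrow(ii)$, the specialization $s=0$, $t=1$ for $(ii)\Rightarrow(iii)$, and the ray construction $\{kx:k\ge 0\}$ with $q_1=q_2=\theta$ for $(iii)\Rightarrow(i)$.
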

\begin{theorem}
    $\tilde{\mathcal{A} }$ is of type $VE$ if and only if the VTCP$(\tilde{\mathcal{A} }, \tilde{q } )$ has at most $\theta$ solution for all $q_{1}\ge\theta$, $q_{2}\ge \theta$.
\end{theorem}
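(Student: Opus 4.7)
The plan is to prove both directions by contradiction, exploiting the equivalent formulation of type $VE$ given in the remark: for every $x\ne\theta$ there exists an index $i\in[n]$ with both $(\mathcal{A}_{1}x^{m-1})_{i}>0$ and $(\mathcal{A}_{2}x^{m-1})_{i}>0$.

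For the forward direction, I would fix arbitrary $q_{1},q_{2}\ge\theta$ and assume $x$ solves the VTCP$(\tilde{\mathcal{A}},\tilde{q})$. If $x\ne\theta$, the contrapositive form of type $VE$ yields an index $i$ at which both $(\mathcal{A}_{1}x^{m-1})_{i}>0$ and $(\mathcal{A}_{2}x^{m-1})_{i}>0$. Because $q_{1,i}\ge 0$ and $q_{2,i}\ge 0$, this forces $(q_{1}+\mathcal{A}_{1}x^{m-1})_{i}>0$ and $(q_{2}+\mathcal{A}_{2}x^{m-1})_{i}>0$, so the $i$th summand of the complementarity inner product is strictly positive while all other summands are nonnegative (being products of nonnegative quantities). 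This contradicts $(q_{1}+\mathcal{A}_{1}x^{m-1})^{T}(q_{2}+\mathcal{A}_{2}x^{m-1})=0$, hence $x=\theta$.

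For the reverse direction, I would argue contrapositively: assuming type $VE$ fails, produce $q_{1},q_{2}\ge\theta$ and a nonzero $x$ solving the VTCP. Pick $x\ne\theta$ with $\mathcal{A}_{1}x^{m-1}\wedge\mathcal{A}_{2}x^{m-1}\le\theta$, so that for each index $i$ at least one of $(\mathcal{A}_{1}x^{m-1})_{i}\le 0$ or $(\mathcal{A}_{2}x^{m-1})_{i}\le 0$ holds. Set
\begin{equation*}
q_{1}:=\bigl(-\mathcal{A}_{1}x^{m-1}\bigr)_{+},\qquad q_{2}:=\bigl(-\mathcal{A}_{2}x^{m-1}\bigr)_{+},
\end{equation*}
which lie in $R_{+}^{n}$. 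A direct componentwise check gives $(q_{1}+\mathcal{A}_{1}x^{m-1})_{i}=\max\{(\mathcal{A}_{1}x^{m-1})_{i},0\}\ge 0$ and similarly for $q_{2}+\mathcal{A}_{2}x^{m-1}$, and the index-by-index alternative from the $\wedge$-hypothesis makes at least one of these two nonnegative quantities vanish at each $i$, yielding the complementarity equation. Thus $x\ne\theta$ solves the VTCP for this admissible $(q_{1},q_{2})$, contradicting the hypothesis.

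Neither direction hides a real obstacle; the entire content is the translation of type $VE$ into the existence of a coordinate on which both tensor-vector products are positive, plus the natural construction $q_{j}=(-\mathcal{A}_{j}x^{m-1})_{+}$ on the other side. The only point to be careful about is recording that the pointwise identity $z_{i}+(-z_{i})_{+}=\max\{z_{i},0\}$ converts the sign condition $\mathcal{A}_{1}x^{m-1}\wedge\mathcal{A}_{2}x^{m-1}\le\theta$ precisely into the complementarity condition demanded by the VTCP.
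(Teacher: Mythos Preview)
Your proposal is correct and follows essentially the same approach as the paper: both directions are argued by contradiction, the forward direction using that $q_{1},q_{2}\ge\theta$ forces $\mathcal{A}_{1}\cdot x\wedge\mathcal{A}_{2}\cdot x\le\theta$ for any solution $x$, and the reverse direction by manufacturing $q_{1},q_{2}\ge\theta$ that make a given $x\ne\theta$ into a solution. Your choice $q_{j}=(-\mathcal{A}_{j}x^{m-1})_{+}$ is in fact slightly cleaner than the paper's case-based definition of $q_{1},q_{2}$, since it handles the equality and strict-inequality cases uniformly, but the underlying idea is identical.
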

\begin{proof}
Sufficiency. Assume that $\tilde{\mathcal{A} }$ is not of type $VE$,
i.e., there is a vector $x\ne\theta$ such that $\mathcal{A}_{1}\cdot
x \wedge \mathcal{A}_{2}\cdot x\le \theta$. If $\mathcal{A}_{1}\cdot
x \wedge \mathcal{A}_{2}\cdot x=\theta$, then it contradicts that
the VTCP$(\tilde{\mathcal{A}}, \tilde{q })$ has at most $\theta$
solution for all $q_{1}\ge\theta$, $q_{2}\ge \theta$. If
$\mathcal{A}_{1}\cdot x \wedge \mathcal{A}_{2}\cdot x<\theta$, then
we take
        \begin{gather*}
            q_{1}=\begin{cases}
                -\left ( \mathcal{A}_{1}\cdot x \right )_{i}, \text{ if } \left( \mathcal{A}_{1}\cdot x \right )_{i}\le \left( \mathcal{A}_{2}\cdot x \right )_{i},  \\
                \left | \left ( \mathcal{A}_{1}\cdot x \right )_{i} \right | , \text{ if } \left( \mathcal{A}_{1}\cdot x \right )_{i}> \left( \mathcal{A}_{2}\cdot x \right )_{i}
                , \end{cases}
            \\q_{2}=\begin{cases}
                -\left ( \mathcal{A}_{2}\cdot x \right )_{i}, \text{ if } \left( \mathcal{A}_{1}\cdot x \right )_{i}\ge  \left( \mathcal{A}_{2}\cdot x \right )_{i},  \\
                \left | \left ( \mathcal{A}_{2}\cdot x \right )_{i} \right | , \text{ if } \left( \mathcal{A}_{1}\cdot x \right )_{i}< \left( \mathcal{A}_{2}\cdot x \right )_{i}
                . \end{cases}
        \end{gather*}
Obviously, $q_{1},q_{2}\ge \theta$, and $x$ is a solution of the VTCP$(
\tilde{\mathcal{A} }, \tilde{q })$. This contradicts that the VTCP$(
\tilde{\mathcal{A} }, \tilde{q })$ has at most $\theta$ solution for
all $q_{1}\ge\theta$, $q_{2}\ge \theta$. So, $\tilde{\mathcal{A} }$
is of type $VE$.

Necessity. Obviously, $\theta$ is the solution of the VTCP$(
\tilde{\mathcal{A} }, \tilde{q })$ for some $q_{1}$, $q_{2}\ge
\theta$. Assume that there exist $q_{1}$, $q_{2}\ge \theta$ such
that the VTCP$( \tilde{\mathcal{A} }, \tilde{q })$ has a nonzero
solution $x$. Since $q_{1}, q_{2}\ge \theta$ and $x$ is a nonzero
solution of the VTCP$( \tilde{\mathcal{A} }, \tilde{q })$, we obtain
\begin{equation*}
            \mathcal{A}_{1}\cdot x \wedge \mathcal{A}_{2}\cdot x\le \left(\mathcal{A}_{1}\cdot x +q_{1}\right)\wedge \left(\mathcal{A}_{2}\cdot x+q_{2}\right)=\theta.
        \end{equation*}
This contradicts that $\tilde{\mathcal{A} }$ is of type $VE$ tensor.
So, the VTCP$(\tilde{\mathcal{A} }, \tilde{q })$ has at most $\theta$
solution for all $q_{1}$, $q_{2}\ge \theta$.
\end{proof}

Next, from the view of the degree theory, we discuss the existence
of the solution of the VTCP$( \tilde{\mathcal{A} }, \tilde{q })$.

Here is a brief review of degree theory.
Suppose that $\Omega$ is a bounded open set in $R^{n}$. $ \partial
\Omega $ and $ \overline{\Omega }$  represent the boundary of
$\Omega $ and the set $\partial \Omega \cup \Omega$, respectively. Let the
continuous function $f:\overline{\Omega }\longrightarrow R^{n}$ be
given and $p\notin f\left(\partial \Omega\right)$. Then the degree
of $f=p$ on $\Omega$ can be well defined and denoted as $deg\left (
f, \Omega, p\right )$, which is an integer and implies the number of solutions for $f=p$,
see \cite{facchinei2003finite}.

Let $\tilde{\mathcal{A} }$ be of type $VR_{0}$,
\begin{equation*}
F_{\left ( \tilde{\mathcal{A} }  \right )}=\mathcal{A}_{1} x^{m-1}\wedge  \mathcal{A}_{2} x^{m-1}.
\end{equation*}
Then $F_{\left ( \tilde{\mathcal{A} } \right )}=\theta\Rightarrow  x=\theta$. Let $\Omega$  be a bounded open set that contains $\theta$. Then $deg\left ( F_{\left ( \tilde{\mathcal{A} } \right )}, \Omega, \theta \right )$ is defined. Furthermore, $deg\left ( F_{\left ( \tilde{\mathcal{A} } \right )}, \Omega, \theta \right )$ is independent of the bounded open set $\Omega$. We denote it by VTCP-$deg\tilde{\mathcal{A} }$.

According to the degree theory, the following results can be obtained.
\begin{theorem}
    If $\tilde{\mathcal{A} }$ is of type $VR_{0}$ and VTCP-$deg\tilde{\mathcal{A} }$ is nonzero, then $\tilde{\mathcal{A} }$ is of type $VQ$.
    \begin{proof}
        Let $q_{1}, q_{2}\in R^{n}$ and
        \[
            F\left ( x, t \right )=\left(tq_{1}+\mathcal{A}_{1} x^{m-1}\right)\wedge \left(tq_{2}+\mathcal{A}_{2} x^{m-1}\right).
        \]
            By Theorem \ref{thm: bound},  the set
            \begin{equation*}
                X=\left \{x:  F\left ( x, t \right )=\theta, t\in \left[0, 1\right] \right \}
            \end{equation*}
            is bounded. Hence, there is a bounded open set  $\Omega$ containing the set $X$ and $X\cap \partial \Omega =\emptyset$.
            Obviously, $F\left ( x, t \right )$ is a homotopy connecting the mappings $F\left ( x, 1  \right) $ and  $F\left ( x \right )$, where $F\left ( x \right )=F\left ( x, 0 \right )$.
            By the homotopy invariance property of the degree \cite{1978Degree}, we obtain
            \begin{equation*}
            deg\left ( F\left ( x, 1\right ), \Omega, \theta \right )=deg\left ( F\left ( x \right ), \Omega, \theta \right )={\rm VTCP}\text{-}deg\tilde{\mathcal{A}}.
            \end{equation*}
Since VTCP-$deg\tilde{\mathcal{A} }$ is nonzero, together with the
property of the degree \cite{1978Degree}, there is a solution of
$F\left ( x, 1 \right )=\theta$ in $\Omega$, which implies that $\tilde{\mathcal{A}
}$ is of type $VQ$.
    \end{proof}
\end{theorem}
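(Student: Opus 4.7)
Fix arbitrary $q_{1}, q_{2} \in R^{n}$; the goal is to produce an $x$ solving the VTCP$(\tilde{\mathcal{A}}, \tilde{q})$. The natural idea is to set up a homotopy between the reference map $F_{(\tilde{\mathcal{A}})}(x) = \mathcal{A}_{1} x^{m-1} \wedge \mathcal{A}_{2} x^{m-1}$, whose degree at $\theta$ on any origin-containing open set is VTCP-$deg\tilde{\mathcal{A}} \neq 0$, and the map whose zeros correspond to solutions of the target VTCP. I would therefore set
\[
F(x, t) = (t q_{1} + \mathcal{A}_{1} x^{m-1}) \wedge (t q_{2} + \mathcal{A}_{2} x^{m-1}), \qquad t \in [0, 1],
\]
so that $F(\cdot, 0) = F_{(\tilde{\mathcal{A}})}$ and the zeros of $F(\cdot, 1)$ are exactly the solutions of VTCP$(\tilde{\mathcal{A}}, \tilde{q})$.

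The next step is to establish that the total zero set $X = \{ x : F(x, t) = \theta \text{ for some } t \in [0,1] \}$ is bounded; this is the step I expect to be the \emph{main obstacle} since it requires uniformity in the homotopy parameter $t$, not merely boundedness for each fixed $t$. I plan to argue by contradiction in the spirit of the proof of Theorem \ref{thm: bound}: suppose there exist sequences $x^{k}$ and $t_{k}$ with $\|x^{k}\| \to \infty$ and $F(x^{k}, t_{k}) = \theta$; pass to a subsequence so that $t_{k} \to t^{*}\in [0,1]$ and $x^{k}/\|x^{k}\| \to \bar{x} \neq \theta$, divide the defining relation by $\|x^{k}\|^{m-1}$, and use positive homogeneity of the $\wedge$-operation together with the vanishing of $t_{k} q_{i} / \|x^{k}\|^{m-1}$ to obtain in the limit $\mathcal{A}_{1} \bar{x}^{m-1} \wedge \mathcal{A}_{2} \bar{x}^{m-1} = \theta$ with $\bar{x} \neq \theta$, which directly contradicts the $VR_{0}$ hypothesis.

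Once $X$ is bounded, I select a bounded open $\Omega$ with $X \subset \Omega$; since $F(\theta, 0) = \theta$ puts $\theta$ inside $X$ and hence inside $\Omega$, the definition of VTCP-$deg\tilde{\mathcal{A}}$ applies and $deg(F(\cdot, 0), \Omega, \theta) = \text{VTCP-}deg\tilde{\mathcal{A}}$. Because $F(x,t) \neq \theta$ on $\partial \Omega \times [0,1]$, the map $F$ is an admissible homotopy, so the homotopy invariance of the degree gives
\[
deg(F(\cdot, 1), \Omega, \theta) = deg(F(\cdot, 0), \Omega, \theta) = \text{VTCP-}deg\tilde{\mathcal{A}} \neq 0.
\]
A nonzero degree guarantees the existence of some $x \in \Omega$ with $F(x, 1) = \theta$, which is precisely a solution of VTCP$(\tilde{\mathcal{A}}, \tilde{q})$; since $q_{1}, q_{2}$ were arbitrary, $\tilde{\mathcal{A}}$ is of type $VQ$.
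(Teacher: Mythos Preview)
Your proposal is correct and follows essentially the same route as the paper: the same homotopy $F(x,t)=(tq_{1}+\mathcal{A}_{1}x^{m-1})\wedge(tq_{2}+\mathcal{A}_{2}x^{m-1})$, the same boundedness of the total zero set, and the same appeal to homotopy invariance of the degree. The only difference is that the paper obtains boundedness of $X$ by citing Theorem~\ref{thm: bound}, whereas you supply the scaling/compactness argument directly; your version is in fact more explicit about the uniformity in $t$ than the paper's one-line citation.
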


\begin{theorem}\label{the: ex}
If $\tilde{\mathcal{A} }= \left \{ \mathcal{A}_{1}, \mathcal{A}_{2}
\right \}$ is of type $VR_{0}$ with $\mathcal{A}_{1},
\mathcal{A}_{2}  \in R^{\left [ m, n  \right ] }$, $m>2$ and
$\mathcal{A}_{2}$ is a $R$-tensor, then $\tilde{\mathcal{A} }$ is of
type $VQ$.
\end{theorem}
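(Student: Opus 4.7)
The plan is to invoke Theorem 4.2: since the hypothesis that $\tilde{\mathcal{A}}$ is $VR_{0}$ is already in the statement, it suffices to show that $\mathrm{VTCP}\text{-}\deg\tilde{\mathcal{A}}\neq 0$. To that end, I would build a continuous deformation of the VTCP map $F_{\tilde{\mathcal{A}}}(x)=\mathcal{A}_{1}x^{m-1}\wedge\mathcal{A}_{2}x^{m-1}$ into a map whose degree at $\theta$ is easy to compute and nonzero, and then transport the degree by homotopy invariance.

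The natural target is the TCP$(\mathcal{A}_{2},0)$ map $x\wedge\mathcal{A}_{2}x^{m-1}$, whose degree at $\theta$ is nonzero because $\mathcal{A}_{2}$ is an $R$-tensor (a standard secondary homotopy $x\wedge(\mathcal{A}_{2}x^{m-1}+s\,e)$, $s\in[0,1]$ with $e=(1,\dots,1)^{T}$, is admissible for every $s$ by the $R$-tensor condition, and at $s=1$ the map linearizes at the origin to $x\wedge e=x$, which has degree $1$). The primary homotopy I plan to use is
\[
H(x,t)=\bigl((1-t)\mathcal{A}_{1}x^{m-1}+t\,x\bigr)\wedge\mathcal{A}_{2}x^{m-1},\qquad t\in[0,1],
\]
so that $H(\cdot,0)=F_{\tilde{\mathcal{A}}}$ and $H(\cdot,1)$ is the TCP$(\mathcal{A}_{2},0)$ map.

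The technical heart of the proof is to verify that $H$ is admissible, i.e., that the joint zero set $\{x:H(x,t)=\theta,\,t\in[0,1]\}$ is bounded. The plan is to argue by contradiction: take $\|x^{k}\|\to\infty$ with $H(x^{k},t^{k})=\theta$, set $y^{k}:=x^{k}/\|x^{k}\|\to y_{0}$ with $\|y_{0}\|=1$ and $t^{k}\to t_{*}$, and derive a contradiction. For $t_{*}\in[0,1)$ the natural scaling by $\|x^{k}\|^{m-1}$ sends the linear contribution $t^{k}y^{k}/\|x^{k}\|^{m-2}$ to $\theta$ (this is where the hypothesis $m>2$ is used), so that the limit reads $\mathcal{A}_{1}y_{0}^{m-1}\wedge\mathcal{A}_{2}y_{0}^{m-1}=\theta$, which is forbidden by $VR_{0}$. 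For $t_{*}=1$ one introduces the finer parameter $\alpha^{k}:=(1-t^{k})\|x^{k}\|^{m-2}\to\alpha\in[0,+\infty]$ and treats the endpoints separately: $\alpha=0$ produces a nontrivial nonnegative TCP$(\mathcal{A}_{2},0)$ solution $y_{0}\wedge\mathcal{A}_{2}y_{0}^{m-1}=\theta$, which is ruled out by the $R_{0}$ consequence of the $R$-tensor property, while $\alpha=+\infty$ brings the nonlinear term back to dominance and one recovers $\mathcal{A}_{1}y_{0}^{m-1}\wedge\mathcal{A}_{2}y_{0}^{m-1}=\theta$, again contradicting $VR_{0}$.

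The main obstacle is the intermediate case $\alpha\in(0,+\infty)$, where the linear and nonlinear contributions balance and the limit reduces to the mixed complementarity condition $(y_{0}+\alpha\mathcal{A}_{1}y_{0}^{m-1})\wedge\mathcal{A}_{2}y_{0}^{m-1}=\theta$ with $\|y_{0}\|=1$. Closing this gap is where the full $R$-tensor property of $\mathcal{A}_{2}$ (rather than $R_{0}$ alone) has to be invoked, with the auxiliary scalar $t\geq 0$ in the $R$-tensor system matched to the balanced scale $\alpha$, together with the control of the mixed term provided by $m>2$. Once admissibility has been verified, the proof concludes quickly: homotopy invariance gives $\mathrm{VTCP}\text{-}\deg\tilde{\mathcal{A}}=\deg(x\wedge\mathcal{A}_{2}x^{m-1},\Omega,\theta)\neq 0$, and Theorem 4.2 then yields that $\tilde{\mathcal{A}}$ is of type $VQ$.
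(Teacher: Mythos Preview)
Your overall strategy---compute $\mathrm{VTCP}\text{-}\deg\tilde{\mathcal{A}}$ by deforming $F_{\tilde{\mathcal{A}}}$ to the TCP$(\mathcal{A}_{2},\cdot)$ map and then shifting by a positive vector---is essentially the paper's approach. The paper packages it as a single homotopy $H(x,t)=\bigl(tq_{1}+(1-t)x+t\mathcal{A}_{1}x^{m-1}\bigr)\wedge\bigl(tq_{2}+(1-t)I+\mathcal{A}_{2}x^{m-1}\bigr)$ connecting the full VTCP map directly to $x\wedge(I+\mathcal{A}_{2}x^{m-1})$, rather than routing through Theorem~4.3 and splitting into two homotopies, but the scaling argument for admissibility and the use of the $R$-tensor hypothesis at the endpoint are the same in substance.

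The obstacle you single out, however, is a genuine gap rather than a detail to be filled in. In the balanced regime $\alpha\in(0,\infty)$ you are left with a unit vector $y_{0}$ satisfying $(y_{0}+\alpha\,\mathcal{A}_{1}y_{0}^{m-1})\wedge\mathcal{A}_{2}y_{0}^{m-1}=\theta$. The $R$-tensor inconsistency system for $\mathcal{A}_{2}$ only speaks about \emph{nonnegative} vectors together with an additive nonnegative scalar $t$; here you do not know $y_{0}\ge\theta$ (only $y_{0}+\alpha\,\mathcal{A}_{1}y_{0}^{m-1}\ge\theta$), and no scalar shift is present, so there is nothing to match against the $R$-tensor template. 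The hypothesis $m>2$ is already spent in killing the linear term in the other regimes and gives no further leverage. In fact the balanced condition cannot be excluded under the stated hypotheses: for $n=1$, $m=3$, $\mathcal{A}_{1}=-1$, $\mathcal{A}_{2}=1$ one checks that $\tilde{\mathcal{A}}$ is $VR_{0}$ and $\mathcal{A}_{2}$ is an $R$-tensor, yet $y_{0}=1$, $\alpha=1$ solves the balanced equation and the VTCP with $q_{1}=-1$, $q_{2}=0$ has no solution. The paper's proof glosses over precisely this transitional regime (it establishes boundedness only for each fixed $t\in(0,1]$ and separately at $t=0$, then asserts uniform boundedness of $X$ without justification), so your diagnosis of the difficulty is sharper than the paper's treatment even though your proposed resolution via the $R$-tensor property cannot succeed.
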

\begin{proof}
Let
\begin{equation*}
\tau\left ( q_{1}, q_{2}, s, d, a, t \right )=\left \{ x:F\left(x\right)
\ge\theta, G\left(x\right) \ge\theta , \left(F\left(x\right)\right) ^{T} G\left(x\right) \le s\right \},
\end{equation*}
where
\begin{equation*}
F\left(x\right)=tq_{1}+\left(1-t\right)x+t\mathcal{A}_{1} x^{m-1} ~ and ~
G\left(x\right)=tq_{2}+\left(1-t\right)d+a\mathcal{A}_{2} x^{m-1}.
\end{equation*}
Similar to the proof of Theorem \ref{thm: bound}, $\tau\left (
q_{1}, q_{2}, s, d, a, t \right )$ is bounded for any $q_{1}, q_{2}
\in R^{n}$, $s\in R$, $t\in \left(0,1\right]$, $a>0$ and
$d>\theta$. Thus, $\tau\left ( q_{1}, q_{2}, 0, I, 1, t \right )$ is
bounded for any $q_{1}, q_{2}
\in R^{n}$ and $t\in \left(0,1\right]$, which is
equivalent to that the set of solutions of
\begin{equation}\label{eq:4.1}
\left(tq_{1}+\left(1-t\right)x+t\mathcal{A}_{1}x^{m-1}\right)\wedge
\left(tq_{2}+\left(1-t\right)I+\mathcal{A}_{2} x^{m-1}\right)=\theta
\end{equation}
is bounded for any $q_{1}, q_{2}
\in R^{n}$ and $t\in \left(0,1\right]$. Since
$\mathcal{A}_{2}$ is a $R$-tensor, \eqref{eq:4.1} has only one $\theta$
solution for any $q_{1}, q_{2}\in R^{n}$ when $t=0$.
Let
\begin{equation*}
 H\left(x, t\right)=\left(tq_{1}+\left(1-t\right)x+t\mathcal{A}_{1}x^{m-1}\right)\wedge \left(tq_{2}+\left(1-t\right)I+\mathcal{A}_{2} x^{m-1}\right),
\end{equation*}
where $q_{1}$, $q_{2}\in R^{n}$ are any given vectors.
Based on the above analysis, the set $X=\left \{x:  H\left(x,
t\right)=\theta, t\in \left[0,1\right]\right \}$ is bounded in
$R^{n}$. So, there is a bounded open set  $\Omega$ containing the
set $X$ and $X\cap \partial \Omega =\emptyset$. Since
$\mathcal{A}_{2}$ is a $R$-tensor, $H\left(x,0\right)=\theta$ has
only $\theta$ solution. Note that $I+\mathcal{A}_{2} x^{m-1}>\theta$
in the field around the $\theta$ vector, which implies that $H\left(x,
0\right)=x$ in the field around the $\theta$ vector.
So, $degree\left(H\left ( x, 0 \right ), \Omega, \theta\right)=1$.
Obviously, $H\left(x, t\right)$
is a homotopy connecting the mappings
\begin{equation*}
H\left(x, 1\right)=\left(q_{1}+\mathcal{A}_{1}x^{m-1}\right)\wedge
\left(q_{2}+\mathcal{A}_{2} x^{m-1}\right) ~ and ~
 H\left(x,0\right)=x\wedge \left(I+\mathcal{A}_{2} x^{m-1}\right).
 \end{equation*}
By the homotopy invariance property of the degree \cite{1978Degree},
\begin{equation*}
degree\left(H\left ( x, 1 \right ), \Omega,
\theta\right)=degree\left(H\left ( x, 0 \right ), \Omega,
\theta\right)=1.
\end{equation*}
Thus, by the property of the degree \cite{1978Degree},
$H\left ( x, 1 \right )=\theta$ has a solution, which implies that the result in Theorem \ref{the: ex} holds.
\end{proof}

Next, some uniqueness conditions of the VTCP$(\tilde{\mathcal{A} }, \tilde{q})$ are obtained.
\begin{theorem}\label{the: un}
If $\tilde{\mathcal{A} }$ is of type
$VP$-$\uppercase\expandafter{\romannumeral2}$ with even order, then
the VTCP$(\tilde{\mathcal{A} }, \tilde{q})$  has at most one
solution for all $q_{1}, q_{2} \in R^{n}$. \end{theorem}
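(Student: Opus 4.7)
The plan is to argue by contradiction: assume the VTCP$(\tilde{\mathcal{A}}, \tilde{q})$ has two distinct solutions $x, y$ for some $q_1, q_2 \in R^n$, and build a symmetric tensor $\mathcal{Z} \in S^{[m-1,n]}$ whose principal diagonal is not all zero, and which violates the type $VP$-II property. A natural candidate is $\mathcal{Z}$ defined by $\mathcal{Z}_{i_2\cdots i_m} = x_{i_2}\cdots x_{i_m} - y_{i_2}\cdots y_{i_m}$; this is manifestly symmetric, and its principal diagonal entries are $x_i^{m-1} - y_i^{m-1}$.

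With this choice of $\mathcal{Z}$, the linearity of the tensor-tensor product in Section~2 gives $\mathcal{A}_k \mathcal{Z} = \mathcal{A}_k x^{m-1} - \mathcal{A}_k y^{m-1}$ for $k=1,2$. Writing $u^x_k := q_k + \mathcal{A}_k x^{m-1}$ and $u^y_k := q_k + \mathcal{A}_k y^{m-1}$, the VTCP conditions on $x$ and $y$ give $u^x_k, u^y_k \ge \theta$ with complementarity $(u^x_1)_i (u^x_2)_i = 0$ and $(u^y_1)_i (u^y_2)_i = 0$ componentwise. A one-line expansion
\[
(\mathcal{A}_1\mathcal{Z})_i(\mathcal{A}_2\mathcal{Z})_i
= \bigl((u^x_1)_i-(u^y_1)_i\bigr)\bigl((u^x_2)_i-(u^y_2)_i\bigr)
= -(u^x_1)_i(u^y_2)_i - (u^y_1)_i(u^x_2)_i \le 0
\]
for every $i \in [n]$ follows, since the cross terms survive and both are products of nonnegative numbers. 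In particular, for each $i$ one of $(\mathcal{A}_1\mathcal{Z})_i, (\mathcal{A}_2\mathcal{Z})_i$ is $\le 0$ and the other is $\ge 0$, i.e.\ $\mathcal{A}_1\mathcal{Z} \wedge \mathcal{A}_2\mathcal{Z} \le \theta \le \mathcal{A}_1\mathcal{Z} \vee \mathcal{A}_2\mathcal{Z}$.

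To invoke the $VP$-II hypothesis I still need to verify that the principal diagonal of $\mathcal{Z}$ is not identically zero. This is exactly where the \emph{even order} hypothesis enters: since $m$ is even, $m-1$ is odd, so the scalar map $t \mapsto t^{m-1}$ is strictly monotone, and hence $x_i^{m-1} - y_i^{m-1} = 0$ iff $x_i = y_i$. Because we assumed $x \ne y$, some coordinate $i$ must satisfy $x_i \ne y_i$, giving $\mathcal{Z}_{i\cdots i} \ne 0$. Thus $\mathcal{Z}$ meets all the requirements in Definition~\ref{def:1}(5), contradicting $\tilde{\mathcal{A}}$ being of type $VP$-II.

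The only non-routine step is recognizing the right $\mathcal{Z}$: once we pick $\mathcal{Z} = x^{m-1} - y^{m-1}$, everything else is forced by bilinearity of $\mathcal{A}\mathcal{Z}$ and by the complementarity identities. The even-order assumption is not a technicality in the argument—it is precisely what guarantees $\mathcal{Z}$ has a nonzero diagonal entry, and in odd order one could have $x_i^{m-1} = y_i^{m-1}$ while $x_i \ne y_i$ (e.g.\ $x_i = -y_i$), so this is the unique spot where that hypothesis is consumed.
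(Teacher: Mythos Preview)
Your proof is correct and follows essentially the same approach as the paper: both take $\mathcal{Z}=x^{m-1}-y^{m-1}\in S^{[m-1,n]}$, establish $\mathcal{A}_{1}\mathcal{Z}\wedge\mathcal{A}_{2}\mathcal{Z}\le\theta\le\mathcal{A}_{1}\mathcal{Z}\vee\mathcal{A}_{2}\mathcal{Z}$, and use the even-order hypothesis (so $m-1$ is odd) to ensure the diagonal of $\mathcal{Z}$ is not identically zero. The only cosmetic difference is that the paper obtains the two-sided inequality by writing down the two ``$\wedge$'' inequalities \eqref{eq:3}, \eqref{eq:4} separately and then negating one, whereas you expand the product $(\mathcal{A}_{1}\mathcal{Z})_i(\mathcal{A}_{2}\mathcal{Z})_i$ directly and use componentwise complementarity to see it is $\le 0$; both arguments are equivalent and equally short.
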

\begin{proof}
Assume that $x$ and $y$ are the two different solutions of the
VTCP$(\tilde{\mathcal{A} }, \tilde{q})$. Then
           \begin{equation}
            \left(q_{1}+\mathcal{A}_{1} x^{m-1}\right)\wedge \left(q_{2}+\mathcal{A}_{2} x^{m-1}\right)=\theta \label{eq:1}
            \end{equation}
            and
            \begin{equation}
                \left(q_{1}+\mathcal{A}_{1} y^{m-1}\right)\wedge \left(q_{2}+\mathcal{A}_{2} y^{m-1}\right)=\theta. \label{eq:2}
            \end{equation}
            From \eqref{eq:1} and \eqref{eq:2}, we obtain
            \begin{equation}
            \mathcal{A}_{1} \left(x^{m-1}-y^{m-1}\right)\wedge \mathcal{A}_{2} \left(x^{m-1}-y^{m-1}\right)\le\theta \label{eq:3}
            \end{equation}
            and
            \begin{equation}
                \mathcal{A}_{1} \left(y^{m-1}-x^{m-1}\right)\wedge \mathcal{A}_{2} \left(y^{m-1}-x^{m-1}\right)\le\theta. \label{eq:4}
            \end{equation}
            Note that \eqref{eq:4} is equivalent to that
            \begin{equation}
                \mathcal{A}_{1} \left(x^{m-1}-y^{m-1}\right)\vee \mathcal{A}_{2} \left(x^{m-1}-y^{m-1}\right)\ge\theta. \label{eq:5}
                \end{equation}
From \eqref{eq:3} and \eqref{eq:5}, $z \in S^{\left[m-1,n\right]}$
satisfies
\begin{equation}
\mathcal{A}_{1}  \mathcal{Z}\wedge \mathcal{A}_{2}  \mathcal{Z} \le
\theta \le \mathcal{A}_{1}  \mathcal{Z} \vee
\mathcal{A}_{2}\mathcal{Z},\label{eq:3.7}
\end{equation}
where $\mathcal{Z}=x^{m-1}-y^{m-1}$. Since the order is even, the
principal diagonal elements of $\mathcal{Z}$ are not all zeros. This
contradicts that $\tilde{\mathcal{A} }$ is of type
$VP$-$\uppercase\expandafter{\romannumeral2}$. So, the
VTCP$(\tilde{\mathcal{A} }, \tilde{q})$  has at most one solution.
\end{proof}

\begin{theorem}
Let $\tilde{\mathcal{A} }= \left \{ \mathcal{A}_{1}, \mathcal{A}_{2}
\right \}$ with $\mathcal{A}_{1}, \mathcal{A}_{2}  \in R^{\left [ m,
n  \right ] }$ and $m$ be an even number. If the
VTCP$(\tilde{\mathcal{A} }, \tilde{q})$ for all $q_{1}, q_{2} \in
R^{n}$ has at most one solution, then $\tilde{\mathcal{A} }$ is of
type $VP$-$\uppercase\expandafter{\romannumeral1}$.
\end{theorem}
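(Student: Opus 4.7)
The plan is to prove the contrapositive: assume that $\tilde{\mathcal{A}}$ is not of type $VP$-$\uppercase\expandafter{\romannumeral1}$ and manufacture a choice of $\tilde{q}=\{q_{1},q_{2}\}$ for which the VTCP$(\tilde{\mathcal{A}},\tilde{q})$ admits two distinct solutions. Failure of type $VP$-$\uppercase\expandafter{\romannumeral1}$ supplies a vector $x \in (R_{+}^{n}\setminus\{\theta\}) \cup (R_{-}^{n}\setminus\{\theta\})$ with
\[
\mathcal{A}_{1}\cdot x \wedge \mathcal{A}_{2}\cdot x \le \theta \le \mathcal{A}_{1}\cdot x \vee \mathcal{A}_{2}\cdot x,
\]
so, coordinatewise, for every $i\in[n]$ at least one of $(\mathcal{A}_{1}x^{m-1})_{i}, (\mathcal{A}_{2}x^{m-1})_{i}$ is non-negative and at least one is non-positive. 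The natural target is to produce a second solution distinct from $\theta$, which is why the plan aims at $q_{1},q_{2}\ge\theta$ with $q_{1}^{T}q_{2}=0$ (forcing $\theta$ to be a solution) while at the same time cancelling the negative components of $\mathcal{A}_{1}x^{m-1}$ and $\mathcal{A}_{2}x^{m-1}$ so that $x$ is also a solution.

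Concretely, I would split the index set $[n]$ according to the sign pattern and define $q_{1}, q_{2}$ with disjoint supports by
\[
(q_{1})_{i}=\begin{cases}-(\mathcal{A}_{1}x^{m-1})_{i}, & \text{if } (\mathcal{A}_{1}x^{m-1})_{i}\le 0 \le (\mathcal{A}_{2}x^{m-1})_{i},\\ 0, & \text{otherwise},\end{cases}
\qquad
(q_{2})_{i}=\begin{cases}-(\mathcal{A}_{2}x^{m-1})_{i}, & \text{if } (\mathcal{A}_{2}x^{m-1})_{i}\le 0 \le (\mathcal{A}_{1}x^{m-1})_{i},\\ 0, & \text{otherwise}.\end{cases}
\]
Then $q_{1},q_{2}\ge\theta$ by construction, and the disjoint-support property forces $(q_{1})_{i}(q_{2})_{i}=0$ for every $i$. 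Hence $\theta$ is a solution of VTCP$(\tilde{\mathcal{A}},\tilde{q})$. Checking $x$: in the first case $(q_{1}+\mathcal{A}_{1}x^{m-1})_{i}=0$ while $(q_{2}+\mathcal{A}_{2}x^{m-1})_{i}=(\mathcal{A}_{2}x^{m-1})_{i}\ge 0$; in the second case the roles swap; in the residual case both $(\mathcal{A}_{j}x^{m-1})_{i}\ge 0$ and $(q_{j})_{i}=0$, so both entries are non-negative with at least one vanishing. Either way, non-negativity of both sums and componentwise orthogonality hold, so $x$ is also a solution. Since $x\ne\theta$, uniqueness is violated, yielding the desired contradiction.

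There is no real technical obstacle here; the whole argument is driven by finding the right coordinatewise construction of $(q_{1},q_{2})$, and the main thing to watch is the book-keeping in the case split (including the degenerate coordinates where both $(\mathcal{A}_{j}x^{m-1})_{i}$ vanish). It is worth noting that the hypothesis $x\in R_{+}^{n}\cup R_{-}^{n}$ is not actually used in the construction itself, only in matching the precise definition of type $VP$-$\uppercase\expandafter{\romannumeral1}$; likewise, the evenness of $m$ enters only implicitly, via the stronger uniqueness hypothesis that this theorem converts, and plays no role in the verification above.
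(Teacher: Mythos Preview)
Your argument is correct, and it is in the same contrapositive spirit as the paper, but the construction you use is genuinely simpler than the one the authors give. The paper defines
\[
q_{1}=(\mathcal{A}_{1}\cdot x)_{+}-\mathcal{A}_{1}\cdot(x)_{+},\qquad q_{2}=(\mathcal{A}_{2}\cdot x)_{+}-\mathcal{A}_{2}\cdot(x)_{+},
\]
and then verifies the identities $q_{j}=(\mathcal{A}_{j}\cdot x)_{-}-\mathcal{A}_{j}\cdot(x)_{-}$ to conclude that $(x)_{+}$ and $(x)_{-}$ are two distinct solutions. Those identities are exactly where the hypotheses $x\in R_{+}^{n}\cup R_{-}^{n}$ and $m$ even are consumed: one needs $\mathcal{A}_{j}\cdot(x)_{+}-\mathcal{A}_{j}\cdot(x)_{-}=\mathcal{A}_{j}\cdot x$, which fails for mixed-sign $x$ and for odd $m$. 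Your construction instead exhibits $\theta$ and $x$ directly as the two solutions, using only the componentwise sign pattern forced by $\mathcal{A}_{1}\cdot x\wedge\mathcal{A}_{2}\cdot x\le\theta\le\mathcal{A}_{1}\cdot x\vee\mathcal{A}_{2}\cdot x$. As you observe, neither the sign restriction on $x$ nor the parity of $m$ is used, so your argument in fact yields the stronger conclusion that uniqueness for all $\tilde{q}$ implies $\tilde{\mathcal{A}}$ is of type $VP$, not merely type $VP$-$\uppercase\expandafter{\romannumeral1}$. (For $x\in R_{+}^{n}$ the two constructions coincide, since then $(x)_{+}=x$, $(x)_{-}=\theta$, and the paper's $q_{j}$ reduces to $(\mathcal{A}_{j}\cdot x)_{-}$, which matches yours.) One cosmetic point: your ``residual case'' is already absorbed into the first two cases because the inequality $\min\le 0$ forces at least one component to be $\le 0$; you may want to streamline the case split accordingly.
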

\begin{proof}
Assume that $\tilde{\mathcal{A} }= \left \{ \mathcal{A}_{1},
\mathcal{A}_{2} \right \}$ is not of type
$VP$-$\uppercase\expandafter{\romannumeral1}$. Then there is a vector $x\in\left ( R_{+}^{n}/\left\{\theta
\right\}\right ) \cup  \left ( R_{-}^{n}/\left\{\theta \right\}
\right )$ such that $\mathcal{A}_{1}\cdot x \wedge
\mathcal{A}_{2}\cdot x\le\theta\le \mathcal{A}_{1}\cdot x \vee
\mathcal{A}_{2}\cdot x $. Thus,
    \begin{equation}
        \left ( \mathcal{A}_{1}\cdot x \right )_{+}  \wedge \left ( \mathcal{A}_{2}\cdot x \right )_{+} =\left ( \mathcal{A}_{1}\cdot x \right )_{-}  \wedge \left ( \mathcal{A}_{2}\cdot x \right )_{-}=\theta. \label{eq:6}
    \end{equation}
    Let
    \begin{equation*}
        q_{1}=\left ( \mathcal{A}_{1}\cdot x \right )_{+}  - \mathcal{A}_{1} \cdot\left ( x \right )_{+}
    ~and~
        q_{2}=\left ( \mathcal{A}_{2}\cdot x \right )_{+}  - \mathcal{A}_{2} \cdot\left ( x \right )_{+}.
    \end{equation*}
    Then, we need to prove
     \begin{equation}
        q_{1}=\left ( \mathcal{A}_{1}\cdot x \right )_{+}  - \mathcal{A}_{1} \cdot\left ( x \right )_{+}=\left ( \mathcal{A}_{1}\cdot x \right )_{-}  - \mathcal{A}_{1} \cdot\left ( x \right )_{-} \label{eq:7}
    \end{equation}
    and
    \begin{equation}
        q_{2}=\left ( \mathcal{A}_{2}\cdot x \right )_{+}  - \mathcal{A}_{2} \cdot\left ( x \right )_{+}=\left ( \mathcal{A}_{2}\cdot x \right )_{-}  - \mathcal{A}_{2} \cdot\left ( x \right )_{-}. \label{eq:8}
    \end{equation}
    Here, we only prove \eqref{eq:7} holds. We consider two cases: $x\in R_{+}^{n}/\left\{\theta \right\}$ and $x\in R_{-}^{n}/\left\{\theta \right\}$.
    \begin{enumerate}
        \item [(1)] if $x\in R_{+}^{n}/\left\{\theta \right\}$, then
        \[\left ( \mathcal{A}_{1}\cdot x \right )_{+}  - \left ( \mathcal{A}_{1}\cdot x \right )_{-}=\mathcal{A}_{1}\cdot x
        ~and~
            \mathcal{A}_{1}\cdot \left(x\right)_{+} - \mathcal{A}_{1} \cdot\left ( x \right )_{-}=\mathcal{A}_{1}\cdot \left(x\right)_{+}=\mathcal{A}_{1}\cdot x; \]
        \item [(2)] if $x\in R_{-}^{n}/\left\{\theta \right\}$, then
        \[\left ( \mathcal{A}_{1}\cdot x \right )_{+}  - \left ( \mathcal{A}_{1}\cdot x \right )_{-}=\mathcal{A}_{1}\cdot x
        ~and~
                \mathcal{A}_{1}\cdot \left(x\right)_{+} - \mathcal{A}_{1} \cdot\left ( x \right )_{-}=- \mathcal{A}_{1} \cdot\left ( x \right )_{-}=\mathcal{A}_{1}\cdot x. \]
    \end{enumerate}
     Based on the above analysis, \eqref{eq:7} holds.

By \eqref{eq:6}, \eqref{eq:7} and \eqref{eq:8}, we obtain
\begin{equation*}
\left ( \mathcal{A}_{1} \cdot\left ( x \right )_{+}+\left (
\mathcal{A}_{1}\cdot x \right )_{+}-\mathcal{A}_{1} \cdot\left ( x
\right )_{+}  \right )\wedge \left ( \mathcal{A}_{2} \cdot\left ( x
\right )_{+}+\left ( \mathcal{A}_{2}\cdot x \right
)_{+}-\mathcal{A}_{2} \cdot\left ( x \right )_{+}  \right )=\theta
\end{equation*}
and
\begin{equation*}
\left ( \mathcal{A}_{1} \cdot\left ( x \right )_{-}+\left (
\mathcal{A}_{1}\cdot x \right )_{-}-\mathcal{A}_{1} \cdot\left ( x
\right )_{-}  \right )\wedge \left ( \mathcal{A}_{2} \cdot\left ( x
\right )_{-}+\left ( \mathcal{A}_{2}\cdot x \right
)_{-}-\mathcal{A}_{2} \cdot\left ( x \right )_{-}  \right )=\theta,
\end{equation*}
which implies that $\left ( x \right )_{+}$ and $\left ( x \right
)_{-}$ are two different solutions of the VTCP$(\tilde{\mathcal{A} },
\tilde{q})$. This contradicts that the VTCP$(\tilde{\mathcal{A} },
\tilde{q})$ has at most one solution. So, $\tilde{\mathcal{A} }$ is
of type $VP$-$\uppercase\expandafter{\romannumeral1}$.
    \end{proof}

\begin{lemma}\label{th:4.8}
If $\tilde{\mathcal{A} }=\left \{ \mathcal{A}_{1}, \mathcal{A}_{2}
\right \}$ is of type strong $VP$, then the VTCP$(\tilde{\mathcal{A}
}, \tilde{q})$ has at most one
solution for all $q_{1}, q_{2} \in R^{n}$.
\end{lemma}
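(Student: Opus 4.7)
The plan is to argue by contradiction. Suppose that the VTCP$(\tilde{\mathcal{A}}, \tilde{q})$ has two distinct solutions $x$ and $y$. Set $u_j(z) := q_j + \mathcal{A}_j z^{m-1}$ for $j = 1, 2$ and $z \in \mathbb{R}^n$. The solution conditions then read $u_1(x), u_2(x), u_1(y), u_2(y) \ge \theta$ together with the complementarity identities $u_1(x) \wedge u_2(x) = \theta$ and $u_1(y) \wedge u_2(y) = \theta$; in particular, at every index $i$ at least one of $u_1(x)_i, u_2(x)_i$ vanishes, and the same holds for $y$.

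Next I would invoke the strong $VP$ hypothesis applied to $G(z) = \mathcal{A}_1 z^{m-1}$ and $F(z) = \mathcal{A}_2 z^{m-1}$ with the pair $x \ne y$. This produces an index $i \in [n]$ for which
\begin{equation*}
\bigl(\mathcal{A}_1 x^{m-1} - \mathcal{A}_1 y^{m-1}\bigr)_i \bigl(\mathcal{A}_2 x^{m-1} - \mathcal{A}_2 y^{m-1}\bigr)_i > 0.
\end{equation*}
Since the constant shifts $q_1, q_2$ cancel in each difference, this is equivalent to $(u_1(x) - u_1(y))_i (u_2(x) - u_2(y))_i > 0$, so both factors share a common strict sign.

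Then I would run a two-case analysis on that common sign. If both differences are positive, then $u_1(x)_i > u_1(y)_i \ge 0$ and $u_2(x)_i > u_2(y)_i \ge 0$, so $\min(u_1(x)_i, u_2(x)_i) > 0$, contradicting the complementarity for $x$. If both differences are negative, then symmetrically $\min(u_1(y)_i, u_2(y)_i) > 0$, contradicting the complementarity for $y$. Either way we reach a contradiction, which forces $x = y$.

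The argument is essentially mechanical once the strong $VP$ condition is unwrapped; the only subtlety, and the step worth being careful about, is recognizing that the $q_j$ terms drop out of the differences so that the strong $VP$ inequality can be transferred directly onto $u_j(x) - u_j(y)$. The complementarity condition at each solution is then exactly what is needed to clash with both possible signs.
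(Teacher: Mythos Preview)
Your proof is correct and follows essentially the same approach as the paper: assume two distinct solutions, invoke the strong $VP$ index, and obtain a contradiction with the componentwise complementarity at that index. The only cosmetic difference is that the paper packages the contradiction by citing the min--max sandwich $\mathcal{A}_1\mathcal{Z}\wedge\mathcal{A}_2\mathcal{Z}\le\theta\le\mathcal{A}_1\mathcal{Z}\vee\mathcal{A}_2\mathcal{Z}$ (with $\mathcal{Z}=x^{m-1}-y^{m-1}$) already derived in the proof of Theorem~\ref{the: un}, whereas you rederive the same conclusion self-containedly via the two-case sign split; the underlying argument is identical.
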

\begin{proof}
If $\tilde{\mathcal{A} }$ is of type strong $VP$, then
$$\underset{i\in \left [ n \right ] }{\text{max}} \left (
\mathcal{A}_{1} x^{m-1}-\mathcal{A}_{1} y^{m-1}  \right )_{i} \left
( \mathcal{A}_{2} x^{m-1}-\mathcal{A}_{2} y^{m-1}\right )_{i}>0$$
holds for all $x,y\in R^{n}$ and $x\ne y$. Let $x$, $y$ be the two
different solutions of the VTCP$(\tilde{\mathcal{A} }, \tilde{q})$.
Similar to the proof of Theorem \ref{the: un}, \eqref{eq:3.7} holds.
This contradicts that $\tilde{\mathcal{A} }$ is of type strong $VP$.
So, the VTCP$(\tilde{\mathcal{A} }, \tilde{q})$ has at most one solution for all $q_{1},
q_{2} \in R^{n}$.
\end{proof}

Based on Proposition \ref{pro:1}, Proposition \ref{pro:3.3}, Theorem \ref{the: ex} and Lemma \ref{th:4.8}, we can obtain a sufficient condition for the existence and uniqueness of the solution of the VTCP$(\tilde{\mathcal{A} }, \tilde{q})$, see Theorem \ref{the:st}.
\begin{theorem} \label{the:st}
If $\tilde{\mathcal{A} }=\left \{ \mathcal{A}_{1}, \mathcal{A}_{2}
\right \}$ is of type strong $VP$ with $\mathcal{A}_{1},
\mathcal{A}_{2}  \in R^{\left [ m, n  \right ] }$, $m>2$ and
$\mathcal{A}_{2}$ is a $R$-tensor, then the VTCP$(\tilde{\mathcal{A}
}, \tilde{q})$  has a unique solution for all $q_{1}, q_{2} \in
R^{n}$.
\end{theorem}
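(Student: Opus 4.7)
The plan is to assemble the existence and uniqueness conclusions from results already established in the paper; the entire argument is a short chain rather than a fresh computation. Specifically, I would split the proof into an \emph{existence} part, which invokes Theorem \ref{the: ex}, and a \emph{uniqueness} part, which invokes Lemma \ref{th:4.8}.

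For existence, the key observation is the containment of special tensor classes. By the second half of Proposition \ref{pro:3.3}, type strong $VP$ is contained in type $VP$, and by the first half of Proposition \ref{pro:1}, type $VP$ is in turn contained in type $VR_0$. Hence $\tilde{\mathcal{A}}$ being of type strong $VP$ forces $\tilde{\mathcal{A}}$ to be of type $VR_0$. Combined with the hypotheses $m > 2$ and $\mathcal{A}_2$ being an $R$-tensor, Theorem \ref{the: ex} applies directly to give that $\tilde{\mathcal{A}}$ is of type $VQ$, so the VTCP$(\tilde{\mathcal{A}}, \tilde{q})$ admits at least one solution for every $q_1, q_2 \in R^n$.

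For uniqueness, I would invoke Lemma \ref{th:4.8} with no additional work: the type strong $VP$ hypothesis is precisely what that lemma requires in order to conclude that VTCP$(\tilde{\mathcal{A}}, \tilde{q})$ has at most one solution for all $q_1, q_2 \in R^n$. Putting the two conclusions together yields the stated existence and uniqueness.

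Since the proof is essentially a chaining of previously established implications, there is no genuine obstacle to overcome. The only point worth writing down carefully is the verification that the hypothesis ``type strong $VP$'' really does funnel through both Proposition \ref{pro:3.3} and Proposition \ref{pro:1} to land in type $VR_0$, so that Theorem \ref{the: ex} can be invoked legitimately; once this bookkeeping is made explicit, existence and uniqueness drop out immediately and the theorem follows.
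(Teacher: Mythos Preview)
Your proposal is correct and matches the paper's own argument essentially verbatim: the paper likewise derives Theorem \ref{the:st} by chaining Proposition \ref{pro:3.3} and Proposition \ref{pro:1} to pass from type strong $VP$ to type $VR_0$, invoking Theorem \ref{the: ex} for existence, and Lemma \ref{th:4.8} for uniqueness. There is nothing to add.
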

Furthermore, based on Proposition \ref{pro:3.3} and Theorem
\ref{the:st}, Corollary 4.1 can be obtained.
\begin{corollary}\label{cro:vp2}
If $\tilde{\mathcal{A} }=\left \{ \mathcal{A}_{1}, \mathcal{A}_{2}
\right \}$ is of type $VP$-$\uppercase\expandafter{\romannumeral2}$
with $\mathcal{A}_{1}, \mathcal{A}_{2}  \in R^{\left [ m, n  \right
] }$, $m$ is an even number and greater than 2, and
$\mathcal{A}_{2}$ is a $R$ tensor, then the VTCP$(\tilde{\mathcal{A}
}, \tilde{q})$  has a unique solution for all $q_{1}, q_{2} \in
R^{n}$.
\end{corollary}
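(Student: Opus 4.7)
The plan is essentially to chain two already-established facts, since this corollary is a direct specialization of Theorem~\ref{the:st} via the inclusion in Proposition~\ref{pro:3.3}. First I would invoke Proposition~\ref{pro:3.3}, which says that type $VP$-$\uppercase\expandafter{\romannumeral2}$ with even order is contained in type strong $VP$. The hypothesis of the corollary specifies that $m$ is an even number greater than $2$ and that $\tilde{\mathcal{A}}$ is of type $VP$-$\uppercase\expandafter{\romannumeral2}$, so the inclusion applies and gives that $\tilde{\mathcal{A}}$ is of type strong $VP$.

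Next I would verify that the remaining hypotheses required to invoke Theorem~\ref{the:st} are all present: we need $\mathcal{A}_{1}, \mathcal{A}_{2} \in R^{[m,n]}$ (given), $m>2$ (given), and $\mathcal{A}_{2}$ to be an $R$-tensor (given). Having reduced to the hypotheses of Theorem~\ref{the:st}, I would then apply it to conclude that the VTCP$(\tilde{\mathcal{A}}, \tilde{q})$ has a unique solution for all $q_{1}, q_{2}\in R^{n}$.

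Since the entire argument is a two-line logical composition with no calculations or constructions, there is no real obstacle to overcome; the only substantive content is ensuring that the parity assumption (which is needed to use the $VP$-$\uppercase\expandafter{\romannumeral2} \subseteq$ strong $VP$ inclusion) is stated as part of the corollary's hypothesis. The heavy lifting has already been done in Proposition~\ref{pro:3.3} (whose proof uses the fact that for even $m$, the symmetric tensor $\mathcal{Z}=x^{m-1}-y^{m-1}$ has non-vanishing diagonal whenever $x\ne y$) and in Theorem~\ref{the:st} (which combined the degree-theoretic existence result from Theorem~\ref{the: ex} with the uniqueness Lemma~\ref{th:4.8}). Thus the proof of the corollary itself should be written in a single short sentence.
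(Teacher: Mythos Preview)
Your proposal is correct and matches the paper's own justification exactly: the paper states that Corollary~\ref{cro:vp2} follows directly from Proposition~\ref{pro:3.3} together with Theorem~\ref{the:st}. The paper also remarks on the alternative route through Proposition~\ref{pro:1}, Theorem~\ref{the: ex} and Theorem~\ref{the: un}, which you allude to parenthetically, so nothing is missing.
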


In fact, Corollary {\rm \ref{cro:vp2}} can also be obtained by
combining Proposition {\rm \ref{pro:1}}, Theorem {\rm \ref{the: ex}}
and Theorem {\rm \ref{the: un}}.

Now, we give the following example to illustrate Theorem \ref{the:st}, see Example \ref{EX:4.1}.
\begin{example}\label{EX:4.1}
    Let $\tilde{\mathcal{A} } = \left \{ \mathcal{A}_{1}, \mathcal{A}_{2}   \right \}$, where
    \begin{gather*}
        \mathcal{A}_{1}\left ( 1, 1, :, : \right )  =\begin{pmatrix}
            1  & 1\\-2
              &1
            \end{pmatrix},~\mathcal{A}_{1}\left (1, 2, :, : \right )=\begin{pmatrix}
             1 & -1\\0
              &0
            \end{pmatrix},
        \\\mathcal{A}_{1}\left ( 2, 1, :, : \right )  =\begin{pmatrix}
            0  & 0\\1
              &0
            \end{pmatrix},~\mathcal{A}_{1}\left (2, 2, :, : \right )=\begin{pmatrix}
             0 & -1\\1
              &1
            \end{pmatrix},
        \\\mathcal{A}_{2}\left ( 1, 1, :, : \right )  =\begin{pmatrix}
            1  & 0\\0
              &0
            \end{pmatrix},~\mathcal{A}_{2}\left (1, 2, :, : \right )=\begin{pmatrix}
             0 & 0\\0
              &0
            \end{pmatrix},
        \\\mathcal{A}_{2}\left ( 2, 1, :, : \right )  =\begin{pmatrix}
            0  & -1\\1
              &0
            \end{pmatrix},~\mathcal{A}_{2}\left (2, 2, :, : \right )=\begin{pmatrix}
             0 & 0\\0
              &1
            \end{pmatrix}.
    \end{gather*}
    Clearly, from Example \ref{ex:sp}, $\tilde{\mathcal{A} }= \left \{ \mathcal{A}_{1}, \mathcal{A}_{2}   \right \}$ is of type strong $VP$,
    \begin{equation*}
        \mathcal{A}_{1}x^{3}= \begin{pmatrix}
            x_{1}^{3}    \\x_{1}^{2}x_{2} +x_{2}^{3}
            \end{pmatrix}~and~
                \mathcal{A}_{2}x^{3} = \begin{pmatrix}
                    x_{1}^{3}   \\x_{2}^{3}

                \end{pmatrix}.
            \end{equation*}
Suppose $\theta \le x\ne\theta$ and $t \ge 0$, we discuss  two cases: $x_{1}>0$ and $x_{1}=0$.
\begin{enumerate}
    \item [(1)] if $x_{1}>0$, then $x_{1}^{3}+t>0; $
    \item [(2)] if $x_{1}=0$, then $x_{1}^{3}+t \ge 0$, but $x_{2}> 0,x_{2}^{3}+t>0. $
\end{enumerate}
Thus, $\mathcal{A}_{2}$ is a R-tensor. By the above analysis, $\tilde{\mathcal{A} }$ satisfies the conditions of Theorem {\rm \ref{the:st}}.

Now, we prove that the VTCP$(\tilde{\mathcal{A} }, \tilde{q})$ has a
unique solution for all $q_{1}, q_{2} \in R^{n}$. Let $q_{1}=(
a_{1},a_{2} )^{T}$ and $q_{2}=( b_{1},b_{2} )^{T}$. We only need to
prove that the following system has a unique solution
  \begin{equation*}
    \begin{cases}
        {{\rm min}} \left\{x_{1}^3+a_{1}, x_{1}^3+b_{1}\right\}=0,  \\ {{\rm min}} \left\{x_{1}^{2}x_{2}+x_{2}^3+a_{2}, x_{2}^3+b_{2}\right\}=0.
       \end{cases}
\end{equation*}
Note that
\begin{equation*}
{{\rm min}} \left\{x_{1}^3+a_{1}, x_{1}^3+b_{1}\right\}=0 \Longleftrightarrow  x_{1}^3+c=0,
\end{equation*}
where $c={{\rm min}} \left\{a_{1}, b_{1}\right\}$. So, $x_1=\sqrt[3]{-c} $.
Now, we show that
\begin{equation}\label{1}
    {{\rm min}} \left\{tx_{2}+x_{2}^3+a_{2}, x_{2}^3+b_{2}\right\}=0
    \end{equation}
    has a unique solution, where $t= \left(\sqrt[3]{-c}\right)^2 \ge 0$.
Let
\begin{equation*}
    f\left(x_{2}\right)=tx_{2}+x_{2}^3+a_{2}, g\left(x\right)=x_{2}^3+b_{2}.
\end{equation*}
Obviously, $f'(x_{2})\geq g'(x_{2})\geq 0$ for any $x_{2}\in R$. So
there is only one $x_{*}$ such that $f(x_{*})=g(x_{*})$. Clearly,
$f(x_{2})$ and $g(x_{2})$ both have only one zero point, denoted as
$x_{1}^{*}$ and $x_{2}^{*}$, respectively. We discusse two cases:
$f(x_{*}) = g(x_{*})\geq0$ and $f(x_{*}) = g(x_{*})<0$.
\begin{enumerate}
    \item [(1)] if $f(x_{*}) = g(x_{*})\geq0$, then the unique solution of \eqref{1} is $x_{2}=x_{1}^{*}$;
    \item [(2)] if $f(x_{*}) = g(x_{*})< 0$, then the unique solution of \eqref{1} is $x_{2}=x_{2}^{*}$.
\end{enumerate}
Thus, the VTCP$(\tilde{\mathcal{A} }, \tilde{q})$ has a unique
solution $x=\left(\sqrt[3]{-c}, x_{1}^{*} \right)^{T}$ or
$x=\left(\sqrt[3]{-c}, x_{2}^{*} \right)^{T}$.
\end{example}
Next, we turn to use the equal form of minimum function to obtain
the sufficient condition for the existence and uniqueness of
solution of the VTCP$(\tilde{\mathcal{A} }, \tilde{q})$. To achieve
this goal, the following lemmas are introduced.
\begin{lemma}\label{le:3.1}
If $\mathcal{A}_{1}$, $\mathcal{A}_{2} \in R^{\left [ m, n  \right ]
}$ are two $\mathcal{Z}$-tensors, then $D_{1} \cdot
\mathcal{A}_{1}+D_{2} \cdot \mathcal{A}_{2}$ is a
$\mathcal{Z}$-tensor, where $D_{1} \in R^{\left[2,n\right]}$ and
$D_{2}\in R^{\left[2,n\right]}$ are any diagonal matrices with the
principal diagonal elements greater than or equal to zero.
\end{lemma}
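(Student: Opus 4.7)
The plan is to reduce everything to what the generalized product $D \cdot \mathcal{A}$ actually does when $D$ is a diagonal matrix of order $2$. Unpacking the definition from the preliminaries with $\mathcal{A}$ of order $m$ and $D$ of order $k=2$, the product has order $(2-1)(m-1)+1 = m$ and entries
\begin{equation*}
(D \cdot \mathcal{A})_{i\, j_1 \cdots j_{m-1}} \;=\; \sum_{s=1}^{n} d_{is}\, a_{s\, j_1 \cdots j_{m-1}}.
\end{equation*}
When $D$ is diagonal with $d_{is} = d_{ii}\,\delta_{is}$, this collapses to $(D\cdot \mathcal{A})_{i\, j_1\cdots j_{m-1}} = d_{ii}\, a_{i\, j_1 \cdots j_{m-1}}$. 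So $D\cdot \mathcal{A}$ is obtained from $\mathcal{A}$ simply by scaling the $i$-th slice $a_{i\,\cdot\cdots\cdot}$ by the scalar $d_{ii}$, and in particular the index $(i,j_1,\ldots,j_{m-1})$ is off-diagonal in $D\cdot \mathcal{A}$ precisely when it is off-diagonal in $\mathcal{A}$.

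With this formula in hand, I would then fix an arbitrary off-diagonal index $(i,j_1,\ldots,j_{m-1})$ of the candidate tensor $\mathcal{C} := D_1\cdot \mathcal{A}_1 + D_2\cdot \mathcal{A}_2$, meaning that the indices $i,j_1,\ldots,j_{m-1}$ are not all equal. By the slice-scaling identity above,
\begin{equation*}
c_{i\, j_1 \cdots j_{m-1}} \;=\; (d_1)_{ii}\,(a_1)_{i\, j_1 \cdots j_{m-1}} \;+\; (d_2)_{ii}\,(a_2)_{i\, j_1 \cdots j_{m-1}}.
\end{equation*}
Since $\mathcal{A}_1$ and $\mathcal{A}_2$ are $\mathcal{Z}$-tensors, both $(a_1)_{i\, j_1 \cdots j_{m-1}} \le 0$ and $(a_2)_{i\, j_1 \cdots j_{m-1}} \le 0$; since $D_1,D_2$ have non-negative diagonal entries, $(d_1)_{ii},(d_2)_{ii}\ge 0$. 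Each summand is therefore $\le 0$, so $c_{i\, j_1\cdots j_{m-1}} \le 0$, and since this holds for every off-diagonal index, $\mathcal{C}$ is a $\mathcal{Z}$-tensor.

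Honestly, I do not expect a substantive obstacle here: the lemma is really just sign-tracking once the definition of $D\cdot \mathcal{A}$ for diagonal $D$ is written out explicitly. The only place one needs to be careful is in the bookkeeping that converts the abstract product definition, with its multi-index $\alpha_1 \in [n]^{k-1}$, into the concrete slice-scaling statement; after that the verification of non-positivity is a single line. So the write-up will be a brief computation whose main content is the lemma-internal claim that $(D\cdot \mathcal{A})_{i\, j_1\cdots j_{m-1}} = d_{ii}\, a_{i\, j_1\cdots j_{m-1}}$ whenever $D$ is diagonal of order $2$.
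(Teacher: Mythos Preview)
Your proposal is correct and matches the paper's own argument essentially line for line: both reduce to the slice-scaling identity $(D\cdot\mathcal{A})_{i\,\alpha}=d_{ii}\,a_{i\,\alpha}$ for diagonal $D$, after which non-positivity of off-diagonal entries follows immediately. The only cosmetic difference is that the paper first concludes each $D_k\cdot\mathcal{A}_k$ is a $\mathcal{Z}$-tensor and then sums, whereas you write out the sum directly; these are the same computation.
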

\begin{proof}
Let $D_{1}=\left(d_{i_{1}i_{2}}\right)$ be a diagonal matrix that
principal diagonal elements are $d_{1}^{\left(1\right)},
d_{2}^{\left(1\right)}, \cdots ,$ $d_{n}^{\left(1\right)}$ and
$d_{i}^{\left(1\right)}\ge 0$ for all $i\in \left[n\right]$. Then
\[\left(D_{1} \cdot \mathcal{A}_{1}\right)_{i\alpha_{1}}=\sum_{i_{2}=1}^{n}d_{ii_{2}}a^{\left ( 1 \right ) }_{i_{2}\alpha _{1} }=d_{i}^{\left(1\right)}a^{\left ( 1 \right ) }_{ i\alpha _{1}}, \]
where $i \in [n]$ and $\alpha _{1}\in \left [ n \right ]^{m-1}$.
Obviously, $D_{1} \cdot \mathcal{A}_{1}$ is a $\mathcal{Z}$-tensor.
Similarly, $D_{2} \cdot \mathcal{A}_{2}$ is also a
$\mathcal{Z}$-tensor. Hence, $D_{1} \cdot \mathcal{A}_{1}+D_{2}
\cdot \mathcal{A}_{2}$ is a $\mathcal{Z}$-tensor.
\end{proof}

\begin{lemma}\label{le:3.2}
If $\tilde{\mathcal{A} }=\left \{ \mathcal{A}_{1}, \mathcal{A}_{2}
\right \}$ is of type semi-positive, then $D_{1} \cdot
\mathcal{A}_{1}+D_{2} \cdot \mathcal{A}_{2}$ is a semi-positive
tensor, where $D_{1}$, $D_{2}\in R^{\left[2,n\right]}$ are any
diagonal matrices with the principal diagonal elements greater than
or equal to zero and  $D_{1}+D_{2} \ne \theta$.
\end{lemma}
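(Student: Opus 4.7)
The plan is to recycle the witness vector provided by the semi-positivity hypothesis on the pair $\tilde{\mathcal{A}}$. By Definition \ref{def:1}(6), there exists $x > \theta$ with $\mathcal{A}_{1}\cdot x > \theta$ and $\mathcal{A}_{2}\cdot x > \theta$. My intention is to use this same $x$ as the witness showing that the single tensor $D_{1}\cdot\mathcal{A}_{1}+D_{2}\cdot\mathcal{A}_{2}$ is semi-positive, so that the conclusion collapses to verifying positivity of $(D_{1}\cdot\mathcal{A}_{1}+D_{2}\cdot\mathcal{A}_{2})\cdot x$ coordinate by coordinate.

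First, I would invoke the distributivity of the generalized tensor product recorded in Section~2 to split
\[
(D_{1}\cdot\mathcal{A}_{1}+D_{2}\cdot\mathcal{A}_{2})\cdot x = D_{1}\cdot(\mathcal{A}_{1}\cdot x) + D_{2}\cdot(\mathcal{A}_{2}\cdot x),
\]
which is legitimate because $D_{1},D_{2}$ are matrices and addition of two same-order tensors distributes over the product on either side. Next, using the diagonal structure, writing $D_{k}=\mathrm{diag}(d_{1}^{(k)},\dots,d_{n}^{(k)})$ with $d_{i}^{(k)}\ge 0$, the $i$-th component reduces to
\[
\bigl((D_{1}\cdot\mathcal{A}_{1}+D_{2}\cdot\mathcal{A}_{2})\cdot x\bigr)_{i} = d_{i}^{(1)}(\mathcal{A}_{1}\cdot x)_{i} + d_{i}^{(2)}(\mathcal{A}_{2}\cdot x)_{i}.
\]
Since each $d_{i}^{(k)}\ge 0$ and each factor $(\mathcal{A}_{k}\cdot x)_{i} > 0$, this scalar is nonnegative for every $i$, and it is strictly positive as soon as $d_{i}^{(1)}+d_{i}^{(2)}>0$.

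Finally, the hypothesis $D_{1}+D_{2}\ne\theta$ guarantees that at least one diagonal entry $d_{i_{0}}^{(1)}+d_{i_{0}}^{(2)}$ is positive, so the combined product is not the zero vector while remaining coordinatewise nonnegative. The key obstacle I anticipate is reconciling this estimate with the precise reading of ``semi-positive'' in the single-tensor setting: if one insists on strict positivity in every coordinate (the natural extension of Definition \ref{def:1}(6)), then the witness $x$ works exactly when every $d_{i}^{(1)}+d_{i}^{(2)}>0$, a condition slightly stronger than $D_{1}+D_{2}\ne\theta$. Under the weaker reading (nonnegative with at least one strict component), the argument above closes the proof directly using only distributivity and the diagonal-plus-positivity inequalities; in either case the main work is the componentwise bookkeeping, not a deeper structural step.
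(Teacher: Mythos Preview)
Your approach is exactly the paper's: take the witness $x>\theta$ from Definition~\ref{def:1}(6), apply distributivity and associativity of the generalized product to get
\[
(D_{1}\cdot\mathcal{A}_{1}+D_{2}\cdot\mathcal{A}_{2})\cdot x
= D_{1}\cdot(\mathcal{A}_{1}x^{m-1})+D_{2}\cdot(\mathcal{A}_{2}x^{m-1}),
\]
and read off positivity componentwise. The paper's proof simply asserts the final strict inequality $>\theta$ without further comment.

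The caveat you raise is well-founded and is in fact a gap in the paper's own statement: with the standard single-tensor definition of semi-positive (there exists $x>\theta$ with $\mathcal{A}x^{m-1}>\theta$, strictly in every coordinate), the hypothesis $D_{1}+D_{2}\ne\theta$ is too weak, since a single vanishing diagonal row $d_{i}^{(1)}=d_{i}^{(2)}=0$ forces the $i$-th component of the product to be zero. The paper glosses over this. In the lemma's only application (Theorem~\ref{the:4.8}) one has $D_{1}+D_{2}=I$, so every diagonal sum equals $1$ and the issue does not arise; the lemma is correct as used, but as stated it needs the stronger hypothesis that every diagonal entry of $D_{1}+D_{2}$ is positive. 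Your analysis is therefore sharper than the paper's proof.
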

\begin{proof}
Since $\tilde{\mathcal{A} }=\left \{ \mathcal{A}_{1},
\mathcal{A}_{2}   \right \}$ is of type semi-positive, there exists
a vector $ x>\theta$ such that $\mathcal{A}_{1} x^{m-1}>\theta$ and
$\mathcal{A}_{2} x^{m-1}>\theta$. Obviously,
\begin{equation*}
\left(D_{1} \cdot\mathcal{A}_{1}+D_{2} \cdot \mathcal{A}_{2}\right) \cdot x
=D_{1} \cdot\mathcal{A}_{1}\cdot x+D_{2} \cdot \mathcal{A}_{2}\cdot x
=D_{1} \cdot\left(\mathcal{A}_{1} x^{m-1}\right)+
D_{2} \cdot \left(\mathcal{A}_{2} x^{m-1}\right) >\theta,
\end{equation*}
where $D_{1}$, $D_{2}\in R^{\left[2,n\right]}$ are any diagonal
matrices with the principal diagonal elements greater than or equal
to zero and  $D_{1}+D_{2} \ne \theta$.
\end{proof}

\begin{lemma}[\rm \cite{ding2003mtensor}]\label{le:3.3}
A $\mathcal{Z}$-tensor is a strong $\mathcal{M}$-tensor if snd only
if it is semi-positive.
\end{lemma}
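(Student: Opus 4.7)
My plan is to exploit the standard decomposition of a $\mathcal{Z}$-tensor as $\mathcal{A}=s\mathcal{I}-\mathcal{B}$, with $\mathcal{B}$ a nonnegative tensor, and to use Perron--Frobenius type results for nonnegative tensors to relate $s$ and the spectral radius $\rho(\mathcal{B})$. Recall that, by definition, $\mathcal{A}$ is a strong $\mathcal{M}$-tensor if there exist a nonnegative tensor $\mathcal{B}\ge\theta$ and a scalar $s>\rho(\mathcal{B})$ such that $\mathcal{A}=s\mathcal{I}-\mathcal{B}$; and $\mathcal{A}$ is semi-positive if there exists $x>\theta$ with $\mathcal{A}x^{m-1}>\theta$.

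For the forward direction, assume $\mathcal{A}=s\mathcal{I}-\mathcal{B}$ is a strong $\mathcal{M}$-tensor. By the Perron--Frobenius theorem for nonnegative tensors, there is a positive vector $x>\theta$ with $\mathcal{B}x^{m-1}=\lambda x^{[m-1]}$ for some $\lambda\le\rho(\mathcal{B})$. Then
\begin{equation*}
\mathcal{A}x^{m-1}=sx^{[m-1]}-\mathcal{B}x^{m-1}=(s-\lambda)x^{[m-1]}>\theta,
\end{equation*}
which shows $\mathcal{A}$ is semi-positive. (If one prefers not to invoke Perron--Frobenius on $\mathcal{B}$ itself, the same argument can be run with the positive vector coming from the dominant eigenpair of a small perturbation of $\mathcal{B}$.)

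For the converse, suppose $\mathcal{A}$ is a $\mathcal{Z}$-tensor that is semi-positive. Choose $s\ge\max_{i}a_{i\cdots i}$, and set $\mathcal{B}:=s\mathcal{I}-\mathcal{A}$. Then the off-diagonal entries of $\mathcal{B}$ equal $-a_{i_{1}\cdots i_{m}}\ge 0$ (by the $\mathcal{Z}$-property), and the diagonal entries are $s-a_{i\cdots i}\ge 0$, so $\mathcal{B}\ge\theta$. The semi-positivity gives $x>\theta$ with $\mathcal{A}x^{m-1}>\theta$, that is, $sx^{[m-1]}>\mathcal{B}x^{m-1}$. Hence the strict inequality $\mathcal{B}x^{m-1}<sx^{[m-1]}$ holds with $x>\theta$; by the comparison/monotonicity theorem for the spectral radius of a nonnegative tensor (the analogue of the Collatz--Wielandt characterization), this forces $\rho(\mathcal{B})<s$. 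Therefore $\mathcal{A}=s\mathcal{I}-\mathcal{B}$ with $s>\rho(\mathcal{B})$, i.e., $\mathcal{A}$ is a strong $\mathcal{M}$-tensor.

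The main obstacle is the converse: one has to convert the componentwise strict inequality $\mathcal{B}x^{m-1}<sx^{[m-1]}$ into the global bound $\rho(\mathcal{B})<s$. This is where a Collatz--Wielandt-type characterization of $\rho(\mathcal{B})$ is essential, and it is exactly the technical engine used in \cite{ding2003mtensor}. Once this tool is available, both directions become short calculations, and the equivalence follows cleanly.
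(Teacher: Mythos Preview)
The paper does not prove this lemma at all; it is quoted as a known result from \cite{ding2003mtensor}, and your sketch is precisely the argument used there. The only point to tighten is the forward direction: a general nonnegative tensor $\mathcal{B}$ need not admit a strictly positive Perron eigenvector, so the perturbation you mention parenthetically is not optional but is the actual mechanism (one passes to $\mathcal{B}+\epsilon\mathcal{E}$ for small $\epsilon>0$ to force a positive eigenvector while keeping $\rho(\mathcal{B}+\epsilon\mathcal{E})<s$, and then computes $\mathcal{A}x_{\epsilon}^{m-1}\ge (s-\rho(\mathcal{B}+\epsilon\mathcal{E}))x_{\epsilon}^{[m-1]}>\theta$).
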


\begin{lemma}[\cite{wu2022perturbation}] \label{le:1}
Let all $a_{i}$, $b_{i}\in R$, $i=1, 2, \cdots, n$. Then there exist
$\lambda_{i}\in\left[0, 1\right]$ with  ${\textstyle
\sum_{i=1}^{n}}\lambda _{i}=1$ such that
\begin{equation*}
\underset{i\in \left [ n \right ] }{{\rm min}}
\left\{a_{i}\right\}-\underset{i\in \left [ n \right ] }{{\rm
min}\left\{b_{i}\right\}}=\sum_{i=1}^{n}
\lambda_{i}\left(a_{i}-b_{i}\right).
\end{equation*}
\end{lemma}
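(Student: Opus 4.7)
The plan is to exploit the fact that the difference $\min_i a_i - \min_i b_i$ is sandwiched between two specific quantities of the form $a_k - b_k$, and then use a one-dimensional convex-combination (intermediate-value) argument to produce the coefficients $\lambda_i$ explicitly.

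First I would pick two indices that realize the minima: let $i^{*}\in\arg\min_{i\in[n]} a_i$ and $j^{*}\in\arg\min_{i\in[n]} b_i$, so that $\min_i a_i = a_{i^{*}}$ and $\min_i b_i = b_{j^{*}}$. From the minimality, $a_{i^{*}}\le a_{j^{*}}$ and $b_{j^{*}}\le b_{i^{*}}$. Subtracting $b_{j^{*}}$ from the first inequality and adding $-b_{i^{*}}+b_{j^{*}}\ge 0$ cleverly, I obtain the two-sided bound
\begin{equation*}
a_{i^{*}}-b_{i^{*}} \;\le\; a_{i^{*}}-b_{j^{*}} \;=\; \min_{i}a_i-\min_{i}b_i \;\le\; a_{j^{*}}-b_{j^{*}}.
\end{equation*}

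Next, since the middle quantity lies in the closed interval whose endpoints are $a_{i^{*}}-b_{i^{*}}$ and $a_{j^{*}}-b_{j^{*}}$, there exists $\mu\in[0,1]$ with
\begin{equation*}
\min_{i}a_i-\min_{i}b_i \;=\; \mu\bigl(a_{i^{*}}-b_{i^{*}}\bigr)+(1-\mu)\bigl(a_{j^{*}}-b_{j^{*}}\bigr).
\end{equation*}
I would then define $\lambda_{i^{*}}:=\mu$, $\lambda_{j^{*}}:=1-\mu$, and $\lambda_k:=0$ for all other $k\in[n]$; the degenerate case $i^{*}=j^{*}$ is handled by simply setting $\lambda_{i^{*}}=1$ and all others $0$, since the two endpoints of the bound then coincide with the difference itself. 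These weights are nonnegative and sum to $1$, and by construction $\sum_{i=1}^{n}\lambda_i(a_i-b_i)$ equals the right-hand side above, which equals $\min_i a_i-\min_i b_i$.

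The only mild subtlety, rather than a serious obstacle, is the non-uniqueness of the argmin indices and the degenerate case $i^{*}=j^{*}$; both are resolved by an ad hoc assignment as above. There are no analytic or combinatorial difficulties — the argument is essentially a one-dimensional interpolation after identifying the two natural endpoints $a_{i^{*}}-b_{i^{*}}$ and $a_{j^{*}}-b_{j^{*}}$, so the whole proof should fit in a few lines.
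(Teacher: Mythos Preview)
Your argument is correct: the two-sided bound $a_{i^{*}}-b_{i^{*}}\le \min_i a_i-\min_i b_i\le a_{j^{*}}-b_{j^{*}}$ follows immediately from the minimality of $i^{*}$ and $j^{*}$, and the one-dimensional convex-combination step then produces the required $\lambda_i$; the degenerate case $i^{*}=j^{*}$ is handled exactly as you say. There is nothing to compare against here, since the paper does not supply its own proof of this lemma --- it is simply quoted from \cite{wu2022perturbation} and then used to derive Lemma~\ref{the: min}; your self-contained argument therefore goes beyond what the paper provides.
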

From Lemma \ref{le:1}, Lemma \ref{the: min} can be easily derived.
\begin{lemma}\label{the: min}
Let $\mathcal{A}_{1}$, $\mathcal{A}_{2} \in T^{m, n}$ and $q_{1}$,
$q_{2}\in R^{n}$. Then there exist $D_{1}$ and $D_{2}$ such that
\begin{equation*}
{{\rm min}} \left\{q_{1}+\mathcal{A}_{1} x^{m-1},
q_{2}+\mathcal{A}_{2} x^{m-1}\right\}-{{\rm min}\left\{0,
0\right\}}=D_{1}\cdot\left(q_{1}+\mathcal{A}_{1}
x^{m-1}\right)+D_{2}\cdot\left(q_{2}+\mathcal{A}_{2} x^{m-1}\right),
\end{equation*}
where $D_{1}$, $D_{2}$ are diagonal matrices that diagonal elements
are $d^{\left(1\right)}=\left(d_{1}^{\left(1\right)},
d_{2}^{\left(1\right)}, \cdots , d_{n}^{\left(1\right)}\right)$
$\in$ $\left[0, 1\right]^{n}$ and
$d^{\left(2\right)}=\left(d_{1}^{\left(2\right)},
d_{2}^{\left(2\right)}, \cdots , d_{n}^{\left(2\right)}\right) \in
\left[0, 1\right]^{n}$, respectively, and $D_{1}+D_{2}=I$.
\end{lemma}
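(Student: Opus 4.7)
The plan is to reduce the statement to an index-by-index application of Lemma \ref{le:1}, then package the scalar weights produced there into two diagonal matrices. Write $u := q_{1}+\mathcal{A}_{1} x^{m-1}$ and $v := q_{2}+\mathcal{A}_{2} x^{m-1}$, so that the $k$th component of the left-hand side equals $\min\{u_{k},v_{k}\}-\min\{0,0\}$, and the claim reduces to producing, for each coordinate $k$, a convex combination of $u_{k}$ and $v_{k}$ that realizes $\min\{u_{k},v_{k}\}$.

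For each fixed $k \in [n]$ I would invoke Lemma \ref{le:1} with its running index length replaced by $2$ and with the four scalars $a_{1}=u_{k}$, $a_{2}=v_{k}$, $b_{1}=0$, $b_{2}=0$. The lemma then supplies weights $\lambda_{1}^{(k)}, \lambda_{2}^{(k)} \in [0,1]$ with $\lambda_{1}^{(k)}+\lambda_{2}^{(k)}=1$ and
\[
\min\{u_{k},v_{k}\}-\min\{0,0\}=\lambda_{1}^{(k)}u_{k}+\lambda_{2}^{(k)}v_{k}.
\]
Next, I collect these weights into the diagonal matrices
$D_{1}:=\mathrm{diag}\!\left(\lambda_{1}^{(1)},\ldots,\lambda_{1}^{(n)}\right)$ and
$D_{2}:=\mathrm{diag}\!\left(\lambda_{2}^{(1)},\ldots,\lambda_{2}^{(n)}\right)$. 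By construction, both matrices have all diagonal entries in $[0,1]$, and the coordinatewise equality $\lambda_{1}^{(k)}+\lambda_{2}^{(k)}=1$ immediately gives $D_{1}+D_{2}=I$. Reading the scalar identity above simultaneously across all $k$, the $k$th component of $D_{1}\cdot u+D_{2}\cdot v$ is exactly $\lambda_{1}^{(k)}u_{k}+\lambda_{2}^{(k)}v_{k}$, which matches the $k$th component of $\min\{u,v\}-\min\{0,0\}$; this proves the identity in vector form.

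I do not expect any genuine obstacle. Because componentwise minimum decouples across coordinates, the only input needed from Lemma \ref{le:1} is its two-term case, applied $n$ times and independently. The only real work is notational, namely stacking the scalar weights into diagonal matrices and observing that the constraint $\lambda_{1}^{(k)}+\lambda_{2}^{(k)}=1$ at every $k$ globalizes to $D_{1}+D_{2}=I$; this is precisely why the authors remark that the lemma \emph{can be easily derived} from Lemma \ref{le:1}.
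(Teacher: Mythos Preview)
Your proposal is correct and is precisely the argument the paper has in mind: the authors do not spell out a proof but simply note that the lemma ``can be easily derived'' from Lemma~\ref{le:1}, and your componentwise application of the two-term case of Lemma~\ref{le:1} with $a_{1}=u_{k}$, $a_{2}=v_{k}$, $b_{1}=b_{2}=0$, followed by assembling the resulting weights into the diagonal matrices $D_{1}$ and $D_{2}$, is exactly that derivation.
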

\begin{theorem}\label{the:4.8}
Let $\tilde{\mathcal{A} }=\left \{ \mathcal{A}_{1}, \mathcal{A}_{2}
\right \}$ be of type semi-positive, where  $\mathcal{A}_{1}$,
$\mathcal{A}_{2} \in R^{\left [ m, n  \right ] }$ are two
$\mathcal{Z}$-tensors. Then the VTCP$(\tilde{\mathcal{A} },
\tilde{q})$ has a unique  positive solution for any $q_{1}$, $q_{2}
< \theta$.
\end{theorem}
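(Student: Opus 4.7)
The plan is to establish uniqueness and existence of a positive solution separately, by combining the min-function reformulation of the VTCP with the strong $\mathcal{M}$-tensor structure produced by Lemmas \ref{le:3.1}--\ref{le:3.3}.

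For uniqueness, I would assume that $x, y > \theta$ are two positive solutions and write the VTCP as $F(x) := \min\{q_1 + \mathcal{A}_1 x^{m-1},\, q_2 + \mathcal{A}_2 x^{m-1}\} = \theta$. Since $F(x) = F(y) = \theta$, applying Lemma \ref{le:1} coordinate by coordinate (with $n = 2$ applied to $\min\{a_1, a_2\} - \min\{b_1, b_2\}$ in each component) produces non-negative diagonal matrices $D_1, D_2$ with $D_1 + D_2 = I$ such that
\[
(D_1 \cdot \mathcal{A}_1 + D_2 \cdot \mathcal{A}_2)\, x^{m-1} = (D_1 \cdot \mathcal{A}_1 + D_2 \cdot \mathcal{A}_2)\, y^{m-1}.
\]
Setting $\mathcal{B} := D_1 \cdot \mathcal{A}_1 + D_2 \cdot \mathcal{A}_2$, Lemma \ref{le:3.1} says $\mathcal{B}$ is a $\mathcal{Z}$-tensor, Lemma \ref{le:3.2} (with $D_1 + D_2 = I \ne \theta$) says $\mathcal{B}$ is semi-positive, and Lemma \ref{le:3.3} therefore makes $\mathcal{B}$ a strong $\mathcal{M}$-tensor. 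Because $q_i < \theta$, the VTCP inequality $\mathcal{A}_i x^{m-1} \ge -q_i$ gives $\mathcal{A}_i x^{m-1} > \theta$ for $i = 1, 2$, so $\mathcal{B} x^{m-1} > \theta$. The well-known property that $\mathcal{B} z^{m-1} = b$ has a unique positive solution for every $b > \theta$ whenever $\mathcal{B}$ is a strong $\mathcal{M}$-tensor then forces $x = y$.

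For existence, I would use a degree-theoretic homotopy argument in the spirit of Theorem \ref{the: ex}. Pick a witness $z > \theta$ of the semi-positivity of $\tilde{\mathcal{A}}$, rescaled if necessary so that $\mathcal{A}_i z^{m-1} + q_i > \theta$ for $i = 1, 2$, and consider
\[
H(x, t) = \min\bigl\{\mathcal{A}_1 x^{m-1} + t q_1 - (1-t) \mathcal{A}_1 z^{m-1},\; \mathcal{A}_2 x^{m-1} + t q_2 - (1-t) \mathcal{A}_2 z^{m-1}\bigr\},\quad t \in [0, 1].
\]
At $t = 0$, the uniqueness argument just outlined (with $y$ replaced by $z$) shows that the unique positive zero of $H(\cdot, 0)$ is $x = z$; at $t = 1$, $H(\cdot, 1) = \theta$ is exactly the VTCP. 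I would then select a bounded open set $\Omega \subset R^n_{++}$ containing every positive zero of $H(\cdot, t)$ for $t \in [0, 1]$, verify that the degree $\deg(H(\cdot, 0), \Omega, \theta) \ne 0$ by a local computation at the isolated zero $z$, and invoke homotopy invariance of the degree \cite{1978Degree} to produce a positive zero of $H(\cdot, 1)$, i.e., a positive solution of the VTCP.

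The main obstacle is producing the a priori bounds needed to build $\Omega$: that solutions of $H(\cdot, t) = \theta$ are uniformly bounded in $R^n_{++}$ and uniformly bounded away from $\partial R^n_{++}$. For the outer bound, any such $x$ satisfies $\mathcal{A}_i x^{m-1} \ge (1-t)\mathcal{A}_i z^{m-1} - t q_i$, whose right-hand side is bounded below componentwise by the strictly positive vector $\min\{\mathcal{A}_i z^{m-1},\, -q_i\}$; the solution-monotonicity of strong $\mathcal{M}$-tensor equations then caps $\|x\|$ independently of $t$. For the inner bound, if some coordinate $x_j$ were to tend to $0$ along a sequence of solutions, the $\mathcal{Z}$-tensor sign structure of $\mathcal{A}_i$ (off-diagonal entries non-positive) would force $(\mathcal{A}_i x^{m-1})_j \le 0$ in the limit, contradicting the strict positivity of the corresponding right-hand side. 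Once these bounds are in place, the degree computation closes the argument.
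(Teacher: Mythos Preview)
Your uniqueness argument is sound and is essentially the paper's mechanism: rewrite the min via the convex-combination Lemma~\ref{le:1}, recognize the coefficient tensor $D_1\cdot\mathcal{A}_1+D_2\cdot\mathcal{A}_2$ as a strong $\mathcal{M}$-tensor through Lemmas~\ref{le:3.1}--\ref{le:3.3}, and appeal to unique positive solvability of strong $\mathcal{M}$-tensor systems. The paper applies this rewrite once, to the VTCP itself rather than to the difference $F(x)-F(y)$, but the idea is the same.

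For existence, however, your route diverges sharply from the paper's. The paper uses no degree theory and no homotopy for this theorem. It simply invokes Lemma~\ref{the: min} to recast the VTCP as the single multilinear system $(D_1\cdot\mathcal{A}_1+D_2\cdot\mathcal{A}_2)x^{m-1}=-D_1q_1-D_2q_2$, notes that $-D_1q_1-D_2q_2>\theta$ because $q_1,q_2<\theta$ and $D_1+D_2=I$, and cites Theorem~3.2 of Ding--Wei~\cite{ding2016solving} to obtain a unique positive solution. Existence and uniqueness thus fall out in one stroke, with no a~priori bounds and no homotopy invariance required. Your plan is considerably more elaborate than what the paper actually does.

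Your homotopy argument also has a concrete gap in the outer bound. From $H(x,t)=\theta$ you extract only the \emph{lower} bound $\mathcal{A}_i x^{m-1}\ge (1-t)\mathcal{A}_i z^{m-1}-tq_i>\theta$ (this comes from each argument of the min being nonnegative), and then assert that ``solution-monotonicity of strong $\mathcal{M}$-tensor equations caps $\|x\|$.'' The direction is wrong: for a strong $\mathcal{M}$-tensor $\mathcal{B}$, monotonicity of the positive solution of $\mathcal{B}x^{m-1}=b$ in $b$ converts a lower bound on $b$ into a \emph{lower} bound on $x$, not an upper one. To bound $\|x\|$ from above you must also use the complementarity half of $H(x,t)=\theta$ (in each coordinate at least one of the two arguments actually vanishes), which gives an \emph{equation} $(D_1\cdot\mathcal{A}_1+D_2\cdot\mathcal{A}_2)x^{m-1}=b(t)$ with $b(t)$ bounded above uniformly in $t$; only then does the $\mathcal{M}$-tensor theory (over the finitely many $0$--$1$ choices of $D_i$, or via monotonicity) yield the required upper bound. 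Once you supply that step---which is really just the paper's one-line argument in disguise---the degree route can be closed, but at that point the direct appeal to \cite{ding2016solving} already finishes the proof.
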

\begin{proof}
By Lemma \ref{the: min}, the VTCP$(\tilde{\mathcal{A} }, \tilde{q})$
is equivalent to find a vector $x$ such that
\begin{equation*}
\left(D_{1} \cdot \mathcal{A}_{1}+D_{2} \cdot
\mathcal{A}_{2}\right)x^{m-1}=-D_{1}q_{1}-D_{2}q_{2},
\end{equation*}
where  $D_{i}$ is a diagonal matrix and the principal diagonal
elements $d^{\left(i\right)}:=(d_{1}^{\left(i\right)},
d_{2}^{\left(i\right)}, \cdots, d_{n}^{\left(i\right)})$ $\in
\left[0, 1\right]^{n}$ with $i \in \left\{1, 2\right\}$ and
$D_{1}+D_{2}=I$. By Lemmas \ref{le:3.1}-\ref{le:3.3}, $D_{1} \cdot
\mathcal{A}_{1}+D_{2} \cdot \mathcal{A}_{2}$ is a strong
$\mathcal{M}$-tensor. Note that $-D_{1}q_{1}-D_{2}q_{2}>\theta$.
Thus, by Theorem 3.2 in \cite{ding2016solving},
\begin{equation*}
\left(D_{1} \cdot \mathcal{A}_{1}+D_{2} \cdot
\mathcal{A}_{2}\right)x^{m-1}=-D_{1}q_{1}-D_{2}q_{2}
\end{equation*}
has a unique positive solution, which is equivalent to that the
VTCP$(\tilde{\mathcal{A} }, \tilde{q})$  has a unique positive
solution.
\end{proof}

Now, we give the following example to illustrate Theorem \ref{the:4.8}, see Example \ref{ex:4.2}.
\begin{example}\label{ex:4.2}
 Let $\mathcal{A}_{1}$, $\mathcal{A}_{2} \in R^{\left[3, 2\right]}$, where
 \begin{gather*}
            \mathcal{A}_{1}\left ( 1, :, : \right )  =\begin{pmatrix}
                1 & 0\\0
                  &0
                \end{pmatrix},~\mathcal{A}_{1}\left ( 2, :, : \right )=\begin{pmatrix}
                 -1 & 0\\0
                  &1
                \end{pmatrix},
            \\\mathcal{A}_{2}\left ( 1, :, : \right )  =\begin{pmatrix}
                    1 & 0\\0
                      &0
                    \end{pmatrix},~\mathcal{A}_{2}\left ( 2, :, : \right )=\begin{pmatrix}
                    -1 & -1\\0
                      &1
                \end{pmatrix}.
            \end{gather*}
Obviously, $\mathcal{A}_{1}$ and $\mathcal{A}_{2}$ are two
$\mathcal{Z}$-tensors. By the calculation, we obtain
            \begin{equation*}
                \mathcal{A}_{1}x^{2} = \begin{pmatrix}
                    x_{1}^{2}   \\-x_{1}^{2} +x_{2}^{2}

                    \end{pmatrix}~and~
                \mathcal{A}_{2}x^{2} = \begin{pmatrix}
                    x_{1}^{2}   \\-x_{1}^{2}-x_{1}x_{2} +x_{2}^{2}

                    \end{pmatrix}.
\end{equation*}
Let $x=(1,2)^{T}$. Then $\mathcal{A}_{1}x^{2}=(1,3)^{T}$ and
$\mathcal{A}_{2}x^{2}=(1,1)^{T}$. Thus, $\tilde{\mathcal{A} }=\left
\{ \mathcal{A}_{1}, \mathcal{A}_{2}   \right \}$ is of type
semi-positive. By the above analysis, $\tilde{\mathcal{A} }$
satisfies the conditions of Theorem {\rm \ref{the:4.8}}.

Now, we prove that the VTCP$(\tilde{\mathcal{A} }, \tilde{q})$  has
a unique positive solution for any $q_{1}$, $q_{2} < \theta$. Let
$q_{1}=(a_{1},a_{2})^{T} < \theta $ and $q_{2}=(b_{1},b_{2} )^{T} <
\theta$. We only need to prove that the following system has a
unique positive solution
\begin{equation*}
\begin{cases}
{{\rm min}} \left\{x_{1}^2+a_{1}, x_{1}^2+b_{1}\right\}=0,  \\ {{\rm
min}} \left\{-x_{1}^{2} +x_{2}^{2}+a_{2}, -x_{1}^{2}-x_{1}x_{2}
+x_{2}^{2}+b_{2}\right\}=0.
\end{cases}
\end{equation*}
Note that
\begin{equation}\label{2}
{{\rm min}} \left\{x_{1}^2+a_{1}, x_{1}^2+b_{1}\right\}=0
\Longleftrightarrow  x_{1}^2+c=0,
\end{equation}
where $c={{\rm min}} \left\{a_{1}, b_{1}\right\}<0$. So, the unique
positive solution of \eqref{2} is $x_1=\sqrt{-c} $. Now, we prove that
\begin{equation}\label{eq:4.11}
    {{\rm min}} \left\{c +a_{2}+x_{2}^{2}, c +b_{2}-\sqrt{-c} x_{2}+x_{2}^{2}\right\}=0
    \end{equation}
    has a unique positive solution. Let
    \begin{equation*}
        f\left(x_{2}\right)=c +a_{2}+x_{2}^{2}~and~ g\left(x_{2}\right)=c +b_{2}-\sqrt{-c} x_{2}+x_{2}^{2}.
    \end{equation*}
Since $f(0)=c +a_{2}<0$ and $g(0)=c +b_{2}<0$, both $f(x_{2})$ and
$g(x_{2})$ have only two zero points with different signs. It is
clear that \eqref{eq:4.11} has a unique positive solution
$x_{2}=x_{*}$. Thus, the VTCP$(\tilde{\mathcal{A} }, \tilde{q})$ has
a unique positive solution $x=\left(\sqrt{-c}, x_{*} \right)^{T}$.
\end{example}
More sufficient conditions for the existence of a unique solution of the tensor equation $\left(D_{1} \cdot \mathcal{A}_{1}+D_{2} \cdot \mathcal{A}_{2}\right)x^{m-1}=-D_{1}q_{1}-D_{2}q_{2}$ can be found in \cite{liu2018tensors}.

Finally, by the way, we present a sufficient condition for the
existence and uniqueness of the solution of the
VTCP$(\tilde{\mathcal{A} }, \tilde{q})$ when $\tilde{\mathcal{A}
}=\left \{ \mathcal{A}_{1}, \mathcal{A}_{2}   \right \}$ is of type
$VP$-$\uppercase\expandafter{\romannumeral2}$. For this, Lemma 4.7
is required.
\begin{lemma} \label{le: ex2}
Assume  that $\tilde{\mathcal{A} }= \left \{ \mathcal{A}_{1},
\mathcal{A}_{2}   \right \}$ is of type $VR_{0}$, where
$\mathcal{A}_{1}, \mathcal{A}_{2}  \in R^{\left [ m, n  \right ] }$
and $m$ is an even number. If $\mathcal{A}_{1} $ has an even order
right inverse $\mathcal{A}_{1}^{R_{k}} \in R^{\left [ k, n  \right ]
}$ such that $\mathcal{A}_{2}\cdot \mathcal{A}_{1}^{R_{k}}$ is a
$R$-tensor, then $\tilde{\mathcal{A} }$ is of type $VQ$. \end{lemma}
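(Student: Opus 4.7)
The plan is to adapt the degree-theoretic argument used in the proof of Theorem~\ref{the: ex}, with the right inverse $\mathcal{A}_1^{R_k}$ playing the role that simply perturbing by $(1-t)x$ plays there. The key idea is to build a homotopy whose $t=1$ zero set recovers the VTCP, whose $t=0$ boundary problem is controlled by the induced $R$-tensor $\mathcal{A}_2\cdot\mathcal{A}_1^{R_k}$, and whose zero set stays in a bounded region for every $t\in[0,1]$.

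First, for arbitrary $q_1,q_2\in R^n$ I would introduce a homotopy of the form
\[
H(x,t)\;=\;\bigl(tq_1+\mathcal{A}_1 x^{m-1}\bigr)\wedge\bigl(tq_2+(1-t)\,\varphi(x)+\mathcal{A}_2 x^{m-1}\bigr),
\]
where $\varphi$ is a fixed positive perturbation (built from the even-order right inverse, so as to reduce the $t=0$ problem to the single-tensor complementarity system associated with $\mathcal{A}_2\cdot\mathcal{A}_1^{R_k}$). Using the $VR_0$ hypothesis together with the rescaling trick $x^k/\|x^k\|\to x_1\neq\theta$ carried out in the proof of Theorem~\ref{thm: bound}, I would verify that the set $X=\{x:H(x,t)=\theta,\;t\in[0,1]\}$ is bounded, which furnishes a bounded open $\Omega\supset X$ with $X\cap\partial\Omega=\emptyset$, so that $\deg(H(\cdot,t),\Omega,\theta)$ is well defined for every $t\in[0,1]$.

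The heart of the argument is the right-inverse substitution $x^{m-1}=\mathcal{A}_1^{R_k}\cdot y$, which is legitimate because $m-1$ is odd (so $t\mapsto t^{m-1}$ is a bijection on $R$) and $k$ is even (so the induced product is compatible with scalar powers). Using associativity of the generalized tensor product and $\mathcal{A}_1\cdot\mathcal{A}_1^{R_k}=\mathcal{I}$ gives $\mathcal{A}_1 x^{m-1}=y$ and $\mathcal{A}_2 x^{m-1}=(\mathcal{A}_2\cdot\mathcal{A}_1^{R_k})\cdot y$, so the boundary equation $H(x,0)=\theta$ reduces to a TCP governed by $\mathcal{A}_2\cdot\mathcal{A}_1^{R_k}$. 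Because this tensor is an $R$-tensor, the $t=0$ problem has only the trivial zero in $\Omega$, and a local analysis around $\theta$ (analogous to the one in the proof of Theorem~\ref{the: ex}, using that the perturbation dominates $\mathcal{A}_2 x^{m-1}$ near $\theta$) shows that $\deg(H(\cdot,0),\Omega,\theta)=1$.

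Finally, by the homotopy-invariance property of the topological degree one obtains $\deg(H(\cdot,1),\Omega,\theta)=\deg(H(\cdot,0),\Omega,\theta)=1\neq 0$, and the basic existence property of the degree then yields an $x\in\Omega$ with $H(x,1)=\theta$, i.e.\ a solution of VTCP$(\tilde{\mathcal{A}},\tilde{q})$; thus $\tilde{\mathcal{A}}$ is of type $VQ$. The step I expect to be the main obstacle is the right-inverse substitution: in the generalized tensor product the identity $\mathcal{A}_1\cdot\mathcal{A}_1^{R_k}\cdot y=y$ must be interpreted carefully (this is precisely where the evenness of both $m$ and $k$ is needed), and one must check that the substituted homotopy $H(x,t)$ genuinely connects the original VTCP to the $R$-tensor TCP while preserving boundedness of the zero set.
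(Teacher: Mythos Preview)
Your overall strategy---homotopy plus the right-inverse substitution plus degree theory---is the right one, but the order in which you carry out the change of variables creates a genuine gap in the degree computation at $t=0$.

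You build the homotopy in the original $x$-variable and postpone the substitution ``$x^{m-1}=\mathcal{A}_1^{R_k}\cdot y$'' to the analysis of $H(\cdot,0)$. First, that substitution is not literally well defined: in the Shao product $\mathcal{A}_1 x^{m-1}$ is shorthand for $\mathcal{A}_1\cdot x$, and there is no free object ``$x^{m-1}$'' to replace. More importantly, even if you rewrite $H(x,0)$ via a change of variables, the degree you need is $\deg\bigl(H(\cdot,0),\Omega,\theta\bigr)$ for the map in the $x$-variable. With $\varphi(\theta)>0$, near $\theta$ your $H(x,0)$ reduces to $x\mapsto \mathcal{A}_1 x^{m-1}$, not to the identity; the local degree of this nonlinear map at $\theta$ has no reason to be $1$ (and is not controlled by the $R$-tensor hypothesis on $\mathcal{A}_2\cdot\mathcal{A}_1^{R_k}$). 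So the step ``local analysis around $\theta$ \dots shows $\deg(H(\cdot,0),\Omega,\theta)=1$'' does not go through.

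The paper sidesteps this by performing the change of variables \emph{before} defining the homotopy: it works directly with maps of the form
\[
y\;\longmapsto\; y\wedge\Bigl(\bigl(\mathcal{A}_2\cdot\mathcal{A}_1^{R_k}\bigr)\cdot y^{[\,1/((m-1)(k-1))\,]}+\text{(perturbation)}\Bigr),
\]
using the explicit correspondence $x=\mathcal{A}_1^{R_k}\cdot y^{[\,1/((m-1)(k-1))\,]}$ (this is where the evenness of $m$ and $k$ enters, to make the componentwise fractional power a bijection on $R$). In these coordinates the first slot of the minimum is simply $y$, so the local form near $\theta$ is the identity and the degree is automatically $1$; boundedness of the zero set is pulled back from the $VR_0$ hypothesis via the correspondence, and existence of a solution in $y$ yields, through the same correspondence, a solution of the original VTCP. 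If you reorganize your argument so that the right-inverse substitution produces the homotopy rather than merely analyzing its endpoint, your plan becomes essentially the paper's proof.
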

\begin{proof}
Let
\begin{equation*}
G\left (x, t \right )=x\wedge\left(\mathcal{A}_{2} \cdot
\mathcal{A}_{1}^{R_{k}} \cdot x ^{\left [ \frac{1}{\left ( m-1
\right )\left ( k-1 \right )  }  \right ]}+td\right)
\end{equation*}
and
 \begin{equation*}
            H\left ( x, t\right)=x\wedge\left(\mathcal{A}_{2} \cdot \mathcal{A}_{1}^{R_{k}} \cdot \left ( x-tq_{1} \right )  ^{\left [ \frac{1}{\left ( m-1 \right )\left ( k-1 \right )  }  \right ]}+tq_{2}\right),
 \end{equation*}
 where $t \in [0, 1]$, $d> 0$, $q_{1},q_{2}\in R^{n}$.
        Obviously, for a given constant $t^{*}\in [0, 1]$, if vector $y$ is a solution of $G\left (x, t^{*} \right )=\theta$,  then there is a solution
        $x=\mathcal{A}_{1}^{R_{k}}\cdot y ^{\left [ \frac{1}{\left ( m-1 \right )\left ( k-1 \right )  }  \right ]}$
        of $\mathcal{A}_{1} x^{m-1}\wedge \left(t^{*}d+\mathcal{A}_{2} x^{m-1}\right)=\theta$ corresponding to it. If $y$ is a solution of $H\left ( x, t^{*}\right)$, then $x=\mathcal{A}_{1}^{R_{k}}\cdot \left(y-t^{*}q_{1}\right) ^{\left [ \frac{1}{\left ( m-1 \right )\left ( k-1 \right )  }  \right ]} $ is a solution of
        \begin{equation*}
            \left(t^{*}q_{1}+\mathcal{A}_{1} x^{m-1}\right)\wedge \left(t^{*}q_{2}+\mathcal{A}_{2} x^{m-1}\right)=\theta.
        \end{equation*}
        By Theorem \ref{thm: bound}, the solution sets of $G\left (x, t \right )=\theta$ and $H\left (x, t \right )=\theta$ are bounded for any $t \in [0, 1]$.
        So, there exists a bounded open set $\Omega$ that contains the sets
        \begin{equation*}
            X_{1}=\left\{x: G\left ( x, t\right)=\theta, t \in [0, 1]\right\}~and~X_{2}=\left\{x: H\left ( x, t\right)=\theta, t \in [0, 1]\right\} ,
        \end{equation*}
        and $\left ( X_{1}\cup X_{2} \right ) \cap \partial\Omega =\emptyset $.
        Since $\mathcal{A}_{2}\cdot \mathcal{A}_{1}^{R_{k}}$ is a $R$-tensor, there is no solution to the following system
        \begin{equation}\label{3}
            \begin{cases}
                \theta \ne x \ge 0, t\ge0, \\\left ( \mathcal{A}_{2}\cdot \mathcal{A}_{1}^{R_{k}}\cdot x \right )_{i} +t=\theta , if~x_{i}>\theta ,
                \\\left ( \mathcal{A}_{2}\cdot \mathcal{A}_{1}^{R_{k}}\cdot x \right )_{j} +t\ge \theta , if~x_{j}=\theta,
               \end{cases}
        \end{equation}
        which implies that there is no solution to the following system
        \begin{gather}
            \begin{cases}
                \theta \ne x \ge 0, t\ge0, \\\left ( \mathcal{A}_{2}\cdot \mathcal{A}_{1}^{R_{k}}\cdot x ^{\left [ \frac{1}{\left ( m-1 \right )\left ( k-1 \right )  }  \right ]}\right )_{i} +t=\theta , if~x_{i}>\theta ,
                \\\left ( \mathcal{A}_{2}\cdot \mathcal{A}_{1}^{R_{k}}\cdot x^{\left [ \frac{1}{\left ( m-1 \right )\left ( k-1 \right )  }  \right ]} \right )_{j} +t\ge \theta , if~x_{j}=\theta.
               \end{cases} \label{eq:9}
        \end{gather}
        In fact, assume that there is an $x$ that satisfies \eqref{eq:9}.
        Let $y=x^{\left [ \frac{1}{\left ( m-1 \right )\left ( k-1 \right )  }  \right ]}$. Then
        \begin{equation*}
            x_{i}>\theta \Leftrightarrow  y_{i}>\theta, x_{i}=\theta \Leftrightarrow  y_{i}=\theta.
        \end{equation*}
        Thus, $y$ is a solution of \eqref{3}, which contradicts that $\mathcal{A}_{2}\cdot \mathcal{A}_{1}^{R_{k}}$ is a $R$-tensor.
        From \eqref{eq:9},
        \begin{equation*}
            G\left (x, t \right )=x\wedge\left(\mathcal{A}_{2} \cdot \mathcal{A}_{1}^{R_{k}} \cdot x ^{\left [ \frac{1}{\left ( m-1 \right )\left ( k-1 \right )  }  \right ]}+td\right)=\theta
        \end{equation*}
        has only the zero solution for any $t \in [0, 1]$ and $d> 0$.
        Since $d>0$,
        \begin{equation*}
            \mathcal{A}_{2} \cdot \mathcal{A}_{1}^{R_{k}} \cdot x ^{\left [ \frac{1}{\left ( m-1 \right )\left ( k-1 \right )  }  \right ]}+d>\theta~and~G\left ( x, 1  \right )=x\wedge\left(\mathcal{A}_{2} \cdot \mathcal{A}_{1}^{R_{k}} \cdot x ^{\left [ \frac{1}{\left ( m-1 \right )\left ( k-1 \right )  }  \right ]}+d\right)=x
        \end{equation*}
        for $x$ near $\theta$.
        Thus, $degree\left(G\left ( x, 1  \right ), \Omega, \theta\right) =1$.
        By the homotopy invariance property of the degree \cite{1978Degree},
        \begin{equation*}
            degree\left(G\left ( x, 1  \right ), \Omega, \theta\right) =degree\left(G\left ( x \right ), \Omega, \theta\right)=1,
        \end{equation*}
        where $G\left ( x \right )=G\left ( x, 0 \right )$.
        Obviously, $H\left ( x \right )=G\left ( x \right )$, where $H\left ( x \right )=H\left ( x, 0 \right )$. By the homotopy invariance theorem \cite{1978Degree},
        \[degree\left(H\left ( x, 1 \right ), \Omega, \theta\right)=degree\left(H\left ( x \right ), \Omega, \theta\right)=degree\left(G\left ( x  \right ), \Omega, \theta\right)=1. \]
        Thus, $\tilde{\mathcal{A} }$ is of type $VQ$.
 \end{proof}

Combining Theorem {\rm \ref{the: un}} and Lemma {\rm \ref{le: ex2}},
Proposition 4.1 holds.

\begin{proposition}
Assume that $\tilde{\mathcal{A} }=\left \{ \mathcal{A}_{1},
\mathcal{A}_{2}   \right \}$ is of type
$VP$-$\uppercase\expandafter{\romannumeral2}$ with $\mathcal{A}_{1},
\mathcal{A}_{2}  \in R^{\left [ m, n  \right ] }$ and $m$ being an
even number. If $\mathcal{A}_{1} $ has an even order right inverse
$\mathcal{A}_{1}^{R_{k}} \in R^{\left [ k, n  \right ] }$ such that
 $\mathcal{A}_{2}\cdot \mathcal{A}_{1}^{R_{k}}$ is a $R$-tensor, then the VTCP$(\tilde{\mathcal{A} }, \tilde{q})$ has a unique solution for all $q_{1}, q_{2} \in
 R^{n}$.
 \end{proposition}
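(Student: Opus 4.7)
The plan is to derive this statement by directly combining the two results cited in the sentence immediately preceding it, namely Theorem \ref{the: un} (uniqueness) and Lemma \ref{le: ex2} (existence via degree theory). The only thing to check carefully is that the hypotheses of the proposition really do feed into the hypotheses of both results.

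First I would verify that $\tilde{\mathcal{A}}$ qualifies as a type $VR_0$ tensor set, because Lemma \ref{le: ex2} demands this. By Proposition \ref{pro:1} we have the inclusions type $VP$-$\uppercase\expandafter{\romannumeral2}$ $\subseteq$ type $VP$ $\subseteq$ type $VR_0$, so the assumption that $\tilde{\mathcal{A}}$ is of type $VP$-$\uppercase\expandafter{\romannumeral2}$ already yields type $VR_0$ for free. Together with the assumed existence of an even order right inverse $\mathcal{A}_1^{R_k}$ such that $\mathcal{A}_2 \cdot \mathcal{A}_1^{R_k}$ is an $R$-tensor, Lemma \ref{le: ex2} applies and gives that $\tilde{\mathcal{A}}$ is of type $VQ$. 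In particular, for every $q_1, q_2 \in R^n$ the VTCP$(\tilde{\mathcal{A}}, \tilde{q})$ has at least one solution.

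Next I would apply Theorem \ref{the: un}. Its only hypotheses are that $\tilde{\mathcal{A}}$ be of type $VP$-$\uppercase\expandafter{\romannumeral2}$ and that $m$ be even, both of which are in force here, so the VTCP$(\tilde{\mathcal{A}}, \tilde{q})$ admits at most one solution for every choice of $q_1, q_2$. Combining the two conclusions gives exactly the claim: for every $q_1, q_2 \in R^n$ the VTCP has one and only one solution.

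I do not expect any substantive obstacle, since the proof is a pure assembly step. The only subtlety worth spelling out in the write-up is the chain of implications \textit{type $VP$-$\uppercase\expandafter{\romannumeral2} \Rightarrow$ type $VR_0$} coming from Proposition \ref{pro:1}, because Lemma \ref{le: ex2} is stated under the $VR_0$ hypothesis rather than the stronger $VP$-$\uppercase\expandafter{\romannumeral2}$ one; once this is pointed out, everything else is a direct invocation of the two prior results.
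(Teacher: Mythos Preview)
Your proposal is correct and follows exactly the approach the paper takes: the paper states that Proposition~4.1 holds by combining Theorem~\ref{the: un} and Lemma~\ref{le: ex2}, and your write-up does precisely this, with the added care of spelling out the inclusion chain from Proposition~\ref{pro:1} that feeds the $VR_0$ hypothesis of Lemma~\ref{le: ex2}.
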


\section{Conclusions}
In this paper, we have introduced the vertical tensor
complementarity problem (VTCP), and defined some tensor sets with
special structures. We obtained an equivalence condition, i.e., the
tenor set $\tilde{\mathcal{A} }$ is of type $VR_{0}$ if and only if
the solution set of the VTCP$(\tilde{\mathcal{A} }, \tilde{q})$ is
bounded. Meanwhile, we obtained some sufficient conditions for the
existence and uniqueness of the solution of the VTCP from two
aspects: the degree theory and the equal form of minimum function.


 {\footnotesize
  
}
\end{document}